\newcommand{\rd}{\mathrm{d}}
\newcommand{\beq}{\begin{equation}}
\newcommand{\eeq}{\end{equation}}
\newcommand{\bea}{\begin{eqnarray}}
\newcommand{\eea}{\end{eqnarray}}
\newcommand{\nn}{\nonumber}
\newcommand\noi{\noindent}
\newcommand{\la}{\lambda}
\newcommand{\bs}{\boldsymbol}
\newcommand{\bk}{\begin{cases}}
\newcommand{\ek}{\end{cases}}
\newcommand{\tbf}{\textbf}
\newtheorem{theorem}{Theorem}
\newtheorem{conjecture}[theorem]{Conjecture}
\newtheorem{definition}[theorem]{Definition}
\newtheorem{proposition}[theorem]{Proposition}
\newtheorem{corollary}[theorem]{Corollary}
\newtheorem{lemma}[theorem]{Lemma}
\theoremstyle{definition}
\newtheorem{remark}[theorem]{\textbf{Remark}}
\newtheorem*{example}{\textbf{Example}}
\begin{document}

\author{Piergiulio Tempesta}
\address{Departamento de F\'{\i}sica Te\'{o}rica II
  (M\'{e}todos Matem\'{a}ticos de la F\'isica), Facultad de Ciencias F\'{\i}sicas,
  Universidad Complutense de Madrid, 28040 -- Madrid, Spain \\ and Instituto de Ciencias
  Matem\'aticas, C/ Nicol\'as Cabrera, No 13--15, 28049 Madrid, Spain}
\email{p.tempesta@fis.ucm.es, piergiulio.tempesta@icmat.es}
\author{Giorgio Tondo}
\address{Dipartimento di Matematica e Geoscienze, Universit\`a  degli Studi di Trieste,
piaz.le Europa 1, I--34127 Trieste, Italy.}
\email{tondo@units.it}

\title[ Higher Haantjes Brackets and Integrability]{Higher Haantjes Brackets and Integrability}

\subjclass[2010]{MSC: 53A45, 58C40, 58A30.}

\date{July 16, 2021}

\begin{abstract}
We propose a new, infinite class of brackets generalizing the Fr\"olicher--Nijenhuis bracket. This class can be reduced to a family of generalized  Nijenhuis torsions recently introduced. In particular, the  Haantjes bracket, the first example of our construction, is relevant in the characterization of Haantjes moduli of operators.

We shall also prove that the vanishing of a higher-level Nijenhuis torsion of a given operator is a sufficient condition for the integrability of its generalized eigen-distributions. This result (which does not require any knowledge of the spectral properties of the operator) generalizes the celebrated Haantjes theorem. The same vanishing condition also guarantees that the operator can be written, in a local chart, in a block-diagonal form.

\end{abstract}

\maketitle

\tableofcontents

\section{Introduction}

In the last two decades, the study of the geometry of Nijenhuis and Haantjes tensors has experienced a resurgence of interest.
The notion of Nijenhuis torsion was introduced  in \cite{Nij1951}, \cite{Nij1955} by A. Nijenhuis  in his study of the integrability of eigen-distributions of operator fields with pointwise distinct eigenvalues. 


In \cite{FN1956}, the graded bracket nowadays called the \textit{Fr\"olicher-Nijenhuis bracket} was defined. This bracket is relevant in several geometric contexts, in particular in the theory of almost-complex structures, as clarified by the
Newlander-Nirenberg theorem \cite{NN1957}, \cite{KMS1993}.
Slightly before, in the seminal paper \cite{Haa1955}, J. Haantjes proposed the fundamental notion of torsion bearing his name. In particular, he proved that
the vanishing of the Haantjes torsion of a $(1,1)$ tensor field is a necessary  condition for the existence of an integrable frame of generalized eigenvectors. This condition is also sufficient in the case of pointwise semisimple operators.

Recently, new and conspicuous applications of Nijenhuis and Haantjes tensors have been found, for instance, in the characterization
of integrable chains of partial differential equations of hydrodynamic type (see e.g. \cite{FeMa}, \cite{BogRey}) and in the study of infinite-dimensional integrable systems (in particular in connection with the celebrated WDVV equations of associativity and the theory of Dubrovin-Frobenius manifolds \cite{MGall13}-\cite{MGall17}). In  \cite{TT2016prepr}--\cite{T2017} we have proposed the notion of Haantjes algebras and the related ones of $\omega \mathcal{H}$ and $P \mathcal{H}$ manifolds as the natural setting for the formulation of the theory of classical finite-dimensional Hamiltonian integrable systems.


The aim of this work is twofold. 

Our first goal is to introduce a new, infinite class of brackets that generalize  the Fr\"olicher--Nijenhuis bracket. The first representative of our class coincides with it; the second one is already a new example, the \textit{Haantjes bracket} $\mathcal{H}_{\bs{A},\bs{B}}(X,Y)$; by means of a recursive procedure we  also define infinitely many novel  higher-level brackets. Here ``bracket'' means that the tensors introduced depend on a \textit{pair} $(\bs{A}, \bs{B})$  of (1,1) tensor fields.

A simple reduction of this family,
obtained when each representative depends on a pair $(\bs{A}, \bs{A})$ of copies of the same operator field, coincides (up to a  constant) with the family of generalized torsions defined independently (and from a different
perspective)  by Y. Kosmann-Schwarzbach in \cite{KS2017} and by ourselves in an early, preprint version of \cite{TT2017} of 2017. 


By means a new ``tower'' of brackets,
we aim to study the geometry of very general families of operators, as the triangularizable ones, that (except in very specific cases) have non-vanishing Haantjes torsion.

We have ascertained  the geometric relevance of our higher brackets in several important situations. Precisely, as stated in Theorem \ref{th:23}, given two commutative semisimple operators, they generate a commutative Haantjes module \cite{TT2017} if and only if their  Haantjes bracket vanishes.
Also, we study several further algebraic properties of this new bracket.

Our second goal is to clarify the geometric meaning of the ``generalized Nijenhuis torsions'' of higher level introduced in \cite{KS2017}, \cite{TT2017}. In Section 5 (Proposition \ref{theo:triangular} and Corollary \ref{cor:ntriangular}) we prove that the vanishing of the generalized Nijenhuis torsion  $\mathcal{\tau}^{(n-1)}_{\boldsymbol{A}}(X,Y)=0$
of level $(n-1)$ of a nilcyclic (i.e. both nilpotent and cyclic) operator field $\bs{A}$ on a manifold of dimension $n$ is necessary for the existence of a local chart where $\bs{A}$ takes a  triangular form (see Eq. \eqref{Anoper}).

The main theorem of the present work, Theorem \ref{th:MainR} of Section 6, concerns the integrability properties of the generalized eigen-distributions (i.e., distributions of generalized eigenvector fields) of an operator field. A seminal result, due to Haantjes \cite{Haa1955}, states that in the case of a \textit{semisimple} operator field, a \textit{necessary and sufficient} condition for the Frobenius integrability of its eigen-distributions of constant rank is that its Haantjes tensor identically vanishes.
However, in the general case of a \textit{non-semisimple} operator, the previous condition is only \textit{sufficient}. Thus, for the infinite class of operators whose Haantjes tensor is not vanishing, no conclusion can be drawn about integrability of their eigen-distributions.

Our main theorem fills this gap. Indeed, we shall prove that the vanishing  of a generalized Nijenhuis torsion $\mathcal{\tau}^{(m)}_{\boldsymbol{A}}(X,Y)$ of level $m$ for some integer $m\geq 1$ provides us with a sufficient condition for the integrability of the generalized eigen-distributions of a given operator field $\bs{A}$. In addition, it ensures the integrability of all of their \textit{direct sums}.  Thus, we are able to construct a \textit{tensorial test} for Frobenius integrability of a very large class of operator fields, which significantly extends the applicability of the original Haantjes torsion criterion.  

The interest of our result, in the spirit of Haantjes's theorem, relies crucially on the fact that, in order to ascertain the integrability properties of a given operator, no knowledge a priori of the spectrum of this operator nor of its eigen-distributions is required.


An important consequence of the main Theorem (see Proposition \ref{cor:Hframe}) is the fact that an operator with a vanishing generalized Nijenhuis torsion (for some $m\geq1$) admits a local coordinate chart where it takes a \textit{block-diagonal} form.

In short, the body of results proposed indicates that all of the infinitely many higher level tensors introduced possess a geometric meaning and are  relevant in applicative contexts.


An important open problem we address in Section 5 is to decompose a generic operator field in a local coordinate chart as the sum of a diagonal operator and another operator, whose generalized Nijenhuis torsion (of a suitable
level) vanishes. 

We also believe that the theory of higher brackets proposed in this work could play a significant role, more generally, in the theory of integrable systems, for instance in the study of generic hydrodynamic-type systems, not possessing
Riemann invariants. For instance, a potentially  interesting area is the study of equations of hydrodynamic type in 2+1 dimensions, namely, equations of the form $u_t=\bs{A}(u)u_x+\bs{B}(u)u_y$, where $\bs{A}(u)$ and $\bs{B}(u)$ are operator fields which not necessarily commute \cite{FeKhu}. It would be interesting, for instance, to classify the pairs of operators $(\bs{A}(u)$, $\bs{B}(u))$ relevant in the theory of hydrodynamic-type systems by means of suitable tensor conditions ensuring integrability (some  
preliminary results are presented in Section 3.3.1.). 


\section{Preliminaries on the Nijenhuis  and Haantjes geometry}
\label{sec:1}
In this section, we shall review some basic notions concerning the geometry of Nijenhuis or Haantjes torsions, following the original papers \cite{Haa1955,Nij1951,FN1956}. Here  we shall focus only on the aspects of the theory which are relevant for the subsequent discussion. 

Let $M$  be a differentiable manifold, $\mathfrak{X}(M)$ the Lie algebra of all smooth vector fields on $M$   and  $\boldsymbol{A}:\mathfrak{X}(M)\rightarrow \mathfrak{X}(M)$ be a smooth $(1,1)$ tensor field (namely, an operator field).  For the sake of simplicity, the expressions ``tensor fields'' and ``operator fields'' will be abbreviated to tensors and operators. In the following, all tensors will be considered to be smooth. 
\begin{definition} \label{def:N}
The
 \textit{Nijenhuis torsion} of $\boldsymbol{A}$ is the vector-valued $2$-form  defined by
\begin{equation} \label{eq:Ntorsion}
\mathcal{T}_ {\boldsymbol{A}} (X,Y):=\boldsymbol{A}^2[X,Y] +[\boldsymbol{A}X,\boldsymbol{A}Y]-\boldsymbol{A}\Big([X,\boldsymbol{A}Y]+[\boldsymbol{A}X,Y]\Big),
\end{equation}
where $X,Y \in \mathfrak{X}(M)$ and $[ \ , \ ]$ denotes the commutator of two vector fields.
\end{definition}
\begin{definition} \label{def:H}
 \noi The \textit{Haantjes torsion} of $\boldsymbol{A}$ is the vector-valued $2$-form defined by
\begin{equation} \label{eq:Haan}
\mathcal{H}_{\boldsymbol{A}}(X,Y):=\boldsymbol{A}^2\mathcal{T}_{\boldsymbol{A}}(X,Y)+\mathcal{T}_{\boldsymbol{A}}(\boldsymbol{A}X,\boldsymbol{A}Y)-\boldsymbol{A}\Big(\mathcal{T}_{\boldsymbol{A}}(X,\boldsymbol{A}Y)+\mathcal{T}_{\boldsymbol{A}}(\boldsymbol{A}X,Y)\Big).
\end{equation}
\end{definition}

\begin{definition}
A Haantjes (Nijenhuis)   operator is a (1,1) tensor  whose  Haantjes (Nijenhuis) torsion identically vanishes.
\end{definition}

A simple, relevant case of Haantjes operator is that of a tensor  $\boldsymbol{A}$  which takes a diagonal form in a local chart $\boldsymbol{x}=(x^1,\ldots,x^n)$:
\begin{equation}
\boldsymbol{A}(\boldsymbol{x})=\sum _{i=1}^n \lambda_{i }(\boldsymbol{x}) \frac{\partial}{\partial x^i}\otimes \rd x^i \ , \label{eq:Ldiagonal}
 \end{equation}
 where $\lambda_{i }(\boldsymbol{x}):=\lambda^{i}_{i}(\boldsymbol{x})$ are the eigenvalues of $\boldsymbol{A}$ and  $\left(\frac{\partial}{\partial x^1},\ldots, \frac{\partial}{\partial x^n}\right) $ are the fields forming the so called \textit{natural frame} associated with the local chart $(x^{1},\ldots,x^{n})$. As is well known, the Haantjes torsion of the diagonal operator \eqref{eq:Ldiagonal} vanishes. 
 
We also recall that two frames  $\{X_1,\ldots,X_n\}$ and $\{Y_1,\ldots,Y_n\}$ are said to be equivalent if $n$ nowhere vanishing smooth
functions $f_i$ exist, such that
\[
 X_i= f_i(\boldsymbol{x}) Y_i \ , \qquad\qquad i=1,\ldots,n \ .
\]
 \begin{definition}\label{def:Iframe} \cite{BCR02}
  An \emph{integrable} frame  is a reference frame equivalent to a natural frame.
\end{definition}

\begin{remark}

We wish to point out that the adjectives  ``diagonalizable'' and ``semisimple'' are both used in the literature, sometimes interchangeably. From now on, we shall call diagonalizable an operator which takes a diagonal form in a natural reference frame (as in formula \eqref{eq:Ldiagonal}), whereas we shall say that  an operator is pointwise semisimple (or semisimple \textit{tout court}) if it admits a local reference frame (not necessarily natural, nor integrable) in which it takes a diagonal form. Diagonalizable operators are obviously semisimple; the converse statement is not true in general. Historically, the problem addressed by Nijenhuis and Haantjes was to ascertain whether a local reference frame constructed out of the eigenvectors of an operator is integrable or not.

\end{remark}

It is interesting to observe that the algebraic properties of Haantjes operators are different, and sometimes richer that those of Nijenhuis operators.
One  useful result is the following (hereafter,  $\boldsymbol{I}: \mathfrak{X}(M)\rightarrow \mathfrak{X}(M)$ will denote the identity operator).
\begin{proposition}  \label{pr:fL} \cite{BogCMP}.
Let  $\boldsymbol{A}$ be a (1,1) tensor. The following identity holds
\begin{equation} \label{eq:LtorsionLocal}
\mathcal{H}_{f \boldsymbol{I}+g \boldsymbol{A}}(X,Y)=g^4\, \mathcal{H}_{ \boldsymbol{A}}(X,Y),
\end{equation}
where $f,g:M \rightarrow \mathbb{R}$ are $C^\infty(M)$ functions.
\end{proposition}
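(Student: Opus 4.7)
The plan is to decompose the Nijenhuis torsion of $\boldsymbol{B}:=f\boldsymbol{I}+g\boldsymbol{A}$ into a principal piece proportional to $\mathcal{T}_{\boldsymbol{A}}$ plus a residual piece involving $df$ and $dg$, and then to exploit the $C^\infty(M)$-linearity of the ``Haantjes build-up'' operator
\[
Q_{\boldsymbol{C}}(T)(X,Y):=\boldsymbol{C}^2T(X,Y)+T(\boldsymbol{C}X,\boldsymbol{C}Y)-\boldsymbol{C}\bigl(T(X,\boldsymbol{C}Y)+T(\boldsymbol{C}X,Y)\bigr),
\]
acting on $(1,2)$-tensors $T$, with the defining relation $\mathcal{H}_{\boldsymbol{C}}=Q_{\boldsymbol{C}}(\mathcal{T}_{\boldsymbol{C}})$.

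Expanding $[\boldsymbol{B}X,\boldsymbol{B}Y]$, $\boldsymbol{B}[X,\boldsymbol{B}Y]$ and $\boldsymbol{B}[\boldsymbol{B}X,Y]$ via the Leibniz rule for Lie brackets, the purely algebraic contributions combine into $g^2\mathcal{T}_{\boldsymbol{A}}$ (as in the case of constant $f,g$), while the derivative terms bundle into
\[
\mathcal{T}_{\boldsymbol{B}}(X,Y)=g^2\mathcal{T}_{\boldsymbol{A}}(X,Y)+g\,\alpha(X,Y)+g\,\beta(X,Y),
\]
with
\[
\alpha(X,Y)=df(\boldsymbol{A}X)Y-df(\boldsymbol{A}Y)X-df(X)\boldsymbol{A}Y+df(Y)\boldsymbol{A}X,
\]
\[
\beta(X,Y)=dg(\boldsymbol{A}X)\boldsymbol{A}Y-dg(\boldsymbol{A}Y)\boldsymbol{A}X-dg(X)\boldsymbol{A}^2Y+dg(Y)\boldsymbol{A}^2X.
\]
Since $Q_{\boldsymbol{B}}$ is $C^\infty(M)$-linear in its tensor argument, one obtains
\[
\mathcal{H}_{\boldsymbol{B}}=g^2\,Q_{\boldsymbol{B}}(\mathcal{T}_{\boldsymbol{A}})+g\,Q_{\boldsymbol{B}}(\alpha)+g\,Q_{\boldsymbol{B}}(\beta).
\]

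The principal term is handled by direct substitution of $\boldsymbol{B}^2=f^2\boldsymbol{I}+2fg\boldsymbol{A}+g^2\boldsymbol{A}^2$ together with the bilinearity of $\mathcal{T}_{\boldsymbol{A}}$ in its arguments: every contribution carrying a factor of $f$ cancels and one is left with $Q_{\boldsymbol{B}}(\mathcal{T}_{\boldsymbol{A}})=g^2\mathcal{H}_{\boldsymbol{A}}$. For the residual terms, the key algebraic identity is that, for any 1-form $\xi$ and any $(1,1)$-tensor $P$, the tensor $T_{\xi,P}(X,Y):=\xi(X)PY-\xi(Y)PX$ satisfies
\[
Q_{\boldsymbol{C}}(T_{\xi,P})(X,Y)=\xi(X)\boldsymbol{C}[\boldsymbol{C},P]Y-\xi(Y)\boldsymbol{C}[\boldsymbol{C},P]X-\xi(\boldsymbol{C}X)[\boldsymbol{C},P]Y+\xi(\boldsymbol{C}Y)[\boldsymbol{C},P]X,
\]
which vanishes whenever $[\boldsymbol{C},P]=0$. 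Since $\alpha$ and $\beta$ are each a difference of two tensors of the form $T_{\xi,P}$ with $P\in\{\boldsymbol{I},\boldsymbol{A},\boldsymbol{A}^2\}$, and $[\boldsymbol{B},\boldsymbol{A}^k]=[f\boldsymbol{I}+g\boldsymbol{A},\boldsymbol{A}^k]=0$ for every $k\geq 0$, both $Q_{\boldsymbol{B}}(\alpha)$ and $Q_{\boldsymbol{B}}(\beta)$ vanish.

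Assembling the three pieces yields $\mathcal{H}_{\boldsymbol{B}}=g^4\mathcal{H}_{\boldsymbol{A}}$. The main obstacle is the bookkeeping in the Nijenhuis expansion: one has to verify that, in the presence of non-constant $f$ and $g$, the derivative contributions organize themselves precisely as differences of $T_{\xi,P}$'s with $P$ a polynomial in $\boldsymbol{A}$. Once this structure is recognized, the vanishing of the residuals is immediate from the commutativity $[\boldsymbol{B},\boldsymbol{A}^k]=0$, and the scaling factor $g^4$ emerges from the two separate $g^2$ factors produced by the Nijenhuis step and by the Haantjes step.
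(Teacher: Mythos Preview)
Your proof is correct. The paper itself does not supply an argument for this proposition; it simply refers the reader to Proposition~1, p.~255 of Bogoyavlenskij's paper \cite{BogCMP}. You therefore provide strictly more than the paper does here.

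Your approach is also structurally cleaner than a brute-force expansion: isolating the ``Haantjes build-up'' operator $Q_{\boldsymbol{C}}$, observing its $C^\infty(M)$-linearity in the tensor slot, and then recognizing the derivative remainders in $\mathcal{T}_{\boldsymbol{B}}$ as sums of tensors $T_{\xi,P}$ with $P\in\{\boldsymbol{I},\boldsymbol{A},\boldsymbol{A}^2\}$ reduces the entire residual computation to the single identity $Q_{\boldsymbol{C}}(T_{\xi,P})=0$ whenever $[\boldsymbol{C},P]=0$. This identity (which you state and which checks out by direct expansion) is exactly the mechanism behind the later Proposition~\ref{pr:HaanApfBpg} and Corollary~\ref{corollary:3} in the paper, so your argument in fact anticipates the pattern used there for the binary and higher-level torsions. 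The decomposition $\mathcal{T}_{\boldsymbol{B}}=g^2\mathcal{T}_{\boldsymbol{A}}+g\alpha+g\beta$ is also consistent with the paper's formula~\eqref{eq:fNtorsion} for $\mathcal{T}_{g\boldsymbol{A}}$ (your $\beta$ is precisely the extra term there), with the $f\boldsymbol{I}$ shift contributing the additional piece $g\alpha$.
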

\begin{proof}
See Proposition 1, p. 255 of \cite{BogCMP}.
\end{proof}
\noi Interestingly enough, such a simple property does not hold in the case of a Nijenhuis operator.
\par
\noi Many more examples of Haantjes operators, relevant in classical mechanics and in Riemannian geometry can be found for instance in \cite{RTT2020prepr}--\cite{T2017}.

\section{Haantjes  brackets}
Let $M$ be a differentiable manifold and $\boldsymbol{A},\boldsymbol{B}:\mathfrak{X}(M)\rightarrow \mathfrak{X}(M)$ be two operators.
\subsection{The Fr\"olicher-Nijenhuis bracket}
\begin{definition}
\cite{FN1956} The \textit{Fr\"olicher--Nijenhuis bracket} of $\boldsymbol{A}$ and $\boldsymbol{B}$ is the vector-valued $2$-form 
given by \footnote{
For sake of clarity, in this article we have renounced to the usual unified notation $[ \cdot \ , \cdot ]$ which, depending on the context, should stands for both  the standard Lie bracket of vector fields and  the Fr\"olicher--Nijenhuis bracket
of operators. Instead, we have preferred to maintain the symbol $\llbracket\cdot \ ,\cdot\rrbracket$ for the Fr\"olicher--Nijenhuis bracket  and to introduce  the notation $[ \cdot \ , \cdot ]$  for the Lie bracket of two vector fields and the commutator of two operators.}
\bea \label{eq:binNtors}
\nn
& &\llbracket\bs{A},\bs{B}\rrbracket(X,Y)= 
 \Big(\bs{AB}+\bs{BA}\Big)[X,Y]+[\bs{A}X,\bs{B}Y]+[\bs{B}X,\bs{A}Y] \\
&&-\bs{A}\Big([X,\bs{B}Y]+[\bs{B}X,Y]\Big )
-\bs{B}\Big ([X,\bs{A}Y]+[\bs{A}X,Y]\Big), \qquad X,Y \in \mathfrak{X}(M) \ .
\eea
\end{definition}
\noi The local expression of the components of the Fr\"olicher-Nijenhuis bracket reads
\begin{equation}\label{eq:FNlocal}
\llbracket\bs{A},\bs{B}\rrbracket^i_{jk}=\sum_{l=1}^n \bigg(\bs{A}^l_{[j}\partial_{|l|}\bs{B}^i_{k]}-\bs{A}^i_l\partial_{[j}\bs{B}^l_{k]} +\bs{B}^l_{[j}\partial_{|l|}\bs{A}^i_{k]}-\bs{B}^i_l\partial_{[j}\bs{A}^l_{k]}
\bigg) \ .
\end{equation}

This bracket has relevant geometric applications \cite{NN1957}, in particular in the theory of almost-complex structures and in the detection of obstructions to integrability \cite{KMS1993}.
The  bracket is symmetric and $\mathbb{R}$-linear (but not $C^\infty(M)$-linear)  in  $\boldsymbol{A}$ and $\boldsymbol{B}$.  In fact, it satisfies the identity
\begin{equation} \label{eq:fABFN}
\begin{split}
\llbracket f \bs{A}, g\bs{B}\rrbracket(X,Y)
&=f g \llbracket\bs{A},\bs{B}\rrbracket(X,Y)
-g \big(\bs{\sf{T}}(\bs{B},\bs{A})-\bs{\sf{T}}^T(\bs{B},\bs{A})(\rd f,X,Y)\big) \\ 
&
-f\big(\bs{\sf{T}}(\bs{A},\bs{B})-\bs{\sf{T}}^T(\bs{A},\bs{B})(\rd g,X,Y)\big) \ .
 \end{split}
\end{equation}
Here  
$\bs{\sf{T}}(\bs{A},\bs{B}): \mathfrak{X}^*(M) \times \mathfrak{X}(M)\times \mathfrak{X}(M)\rightarrow \mathfrak{X}(M)$ 
is the  vector-valued $3$--tensor defined by
\begin{equation}\label{eq:T}
\bs{\sf{T}}(\bs{A},\bs{B})(\alpha,X,Y):=(\bs{I}\otimes \bs{AB} -\bs{A}\otimes \bs{B})(\alpha,X,Y) \ .
\end{equation}
We shall denote by $\bs{\sf{T}}^T(\alpha,X,Y):=\bs{\sf{T}}(\alpha,Y,X)$ the transposed   of $\bs{\sf{T}}$ w.r.t. the last two arguments. We also recall that for each operator $\bs{A}$, $\bs{B}$, and for all $\alpha\in \mathfrak{X}^*(M)$, $X,Y \in \mathfrak{X}(M)$,
$$
(\bs{A}\otimes \bs{B}) (\alpha,X,Y)=\langle \alpha, \bs{A}X\rangle \,\bs{B}Y \ .
$$

\noi Note that  for each operator  $ \bs{B}: \mathfrak{X}(M)\rightarrow \mathfrak{X}(M)$, we have
\begin{equation} \label{eq:id}
 \llbracket\bs{I},\bs{B}\rrbracket(X,Y)= \boldsymbol{0}, \qquad \forall X,Y \in \mathfrak{X}(M) \ .
\end{equation}
\noi Choosing $\bs{A}=\bs{B}$ in Eq. \eqref{eq:binNtors}, one gets twice the Nijenhuis torsion:
\[
\llbracket \boldsymbol{A,A}\rrbracket (X,Y) = 2~\mathcal{T}_ {\boldsymbol{A}} (X,Y)  \ .
\]
For all $f,g \in C^\infty(M)$, the following identity holds:
\begin{equation} \label{eq:idtau}
\begin{split}
\tau_{f\bs{A}+g\bs{B}}(X,Y)&=f^2 \tau_{\bs{A}}(X,Y)- f\big(\bs{\sf{T}}(\bs{A},\bs{A})-\bs{\sf{T}}^T(\bs{A},\bs{A})\big)(\rd f,X,Y)
+g^2\tau_{\bs{B}}(X,Y) \\ &- g\big(\bs{\sf{T}}(\bs{B},\bs{B})-\bs{\sf{T}}^T(\bs{B},\bs{B})\big)(\rd g,X,Y) 
+f \,g\, \llbracket \boldsymbol{A},\bs{B}\rrbracket (X,Y) \\&-  f\big(\bs{\sf{T}}(\bs{A},\bs{B})-\bs{\sf{T}}^T(\bs{A},\bs{B})\big)(\rd g,X,Y)
-g\big(\bs{\sf{T}}(\bs{B},\bs{A})-\bs{\sf{T}}^T(\bs{B},\bs{A})\big)(\rd f,X,Y) \ .
\end{split}
\end{equation}
This identity allows us to characterize modules of Nijenhuis operators.
\begin{proposition}
Let  $\bs{A}$ and $\bs{B}$ be two Nijenhuis operators. They generate a module of Nijenhuis operators if and only if the following conditions are fulfilled 
\begin{eqnarray}
&& \llbracket \boldsymbol{A},\bs{B}\rrbracket=\bs{0} \label{eq:FN0} \\
\bs{\sf{T}}(\bs{A},\bs{A})&=&\bs{\sf{T}}^T(\bs{A},\bs{A}) \ ,\qquad\qquad
\bs{\sf{T}}(\bs{B},\bs{B})=\bs{\sf{T}}^T(\bs{B},\bs{B}) \\
\bs{\sf{T}}(\bs{A},\bs{B})&=&\bs{\sf{T}}^T(\bs{A},\bs{B})  \ ,\qquad\qquad
\bs{\sf{T}}(\bs{B},\bs{A})=\bs{\sf{T}}^T(\bs{B},\bs{A}) \ . \label{eq:T0}
\end{eqnarray}

 In particular, they generate a vector space of Nijenhuis operators if and only if their Fr\"olicher--Nijenhuis bracket identically vanishes.
\end{proposition}
\textbf{Examples}. 
\begin{itemize}
\item[i)]
Let us consider the couple of Nijenhuis operators $\bs{A}= f(\bs{x}) \frac{\partial}{\partial x^1}\otimes \rd x^n$,  $\bs{B}= g(\bs{x}) \frac{\partial}{\partial x^1}\otimes \rd x^n$. They satisfy conditions \eqref{eq:FN0}-\eqref{eq:T0}; then, they generate a module of Nijenhuis operators.
\item[ii)]
Let us consider the couple of Nijenhuis operators $\bs{A}= f(x^i) \frac{\partial}{\partial x^i}\otimes \rd x^i$,  $\bs{B}= g(x^k) \frac{\partial}{\partial x^k}\otimes \rd x^k$, whose Fr\"olicher-Nijenhuis bracket vanishes. In fact, they generate a vector space of Nijenhuis operators. 
\end{itemize}
\vspace{2mm}
Inspired by the previous construction, we shall introduce a novel ``tower'' of higher-level brackets. The first step is to generalize the Fr\"olicher--Nijenhuis bracket.
\subsection{A new family of higher brackets}
Hereafter, we shall present the main algebraic construction of this work, namely the recursive definition of an infinite class of new brackets of couples of  operators.
\begin{definition} \label{GHBT}
Let $M$ be a differentiable manifold of dimension $n$ and let $\boldsymbol{A},\boldsymbol{B}:\mathfrak{X}(M)\rightarrow \mathfrak{X}(M)$ be two $(1,1)$ tensors.
 The
 \textit{Haantjes bracket of level $m\in\mathbb{N}\backslash\{0\}$} of $\boldsymbol{A}$ and $\boldsymbol{B}$  is the vector--valued $2$--form defined, for any  $X,Y \in \mathfrak{X}(M)$, by the relations
\bea \label{GHTn}
\nn & & \!\!\!\!\!\!\!\!\mathcal{H}^{(1)}_{\boldsymbol{A,B}} (X,Y) := \llbracket\bs{A}, \bs{B}\rrbracket(X,Y) \\
\nn \text{and} \\
\nn & &\!\!\!\!\!\!\!\!  \mathcal{H}^{(m)}_{\boldsymbol{A,B}}(X,Y):=\Big(\bs{AB}+ \bs{BA}\Big)\mathcal{H}_{\boldsymbol{A,B}}^{(m-1)}(X,Y) +\mathcal{H}_{\boldsymbol{A,B}}^{(m-1)}(\bs{A}X,\bs{B}Y)+\mathcal{H}_{\boldsymbol{A,B}}^{(m-1)}(\bs{B}X,\bs{A}Y)
 \\
\nn &&\!\!\!\!\!\!\!\! -\bs{A}\Big(\mathcal{H}_{\boldsymbol{A,B}}^{(m-1)}(X,\bs{B}Y) +\mathcal{H}_{\boldsymbol{A,B}}^{(m-1)}(\bs{B}X,Y)\big)-\bs{B}\big(\mathcal{H}_{\boldsymbol{A,B}}^{(m-1)}(X,\bs{A}Y)+
\mathcal{H}^{(m-1)}_{\boldsymbol{A,B}}(\bs{A}X,Y)\Big), \quad m\geq 2 \ .
\eea
\beq
\eeq
\end{definition}
None of these brackets for $m\geq 2$ is $\mathbb{R}$-linear in $\bs{A}$ and $\bs{B}$; however, they are symmetric in the interchange of $\bs{A}$ and $\bs{B}$.

\noi The following statement can be useful for computational purposes.
\begin{lemma}
The expression in local coordinates of the   Haantjes brackets of level $m$, for $m\geq 2$,  reads
\begin{eqnarray}\label{eq:GenHaanLocal}
\nn (\mathcal{H}^{(m)}_{\boldsymbol{A,B}})^i_{jk}&=&   \sum_{\alpha,\beta=1}^n\bigg(
\boldsymbol{A}^i_\alpha \boldsymbol{B}^\alpha_\beta(\mathcal{H}^{(m-1)}_{\boldsymbol{A,B}})^\beta_{jk}  +
\boldsymbol{B}^i_\alpha \boldsymbol{A}^\alpha_\beta(\mathcal{H}^{(m-1)}_{\boldsymbol{A,B}})^\beta_{jk}+
(\mathcal{H}^{(m-1)}_{\boldsymbol{A,B}})^i_{\alpha \beta}\boldsymbol{A}^\alpha_j \boldsymbol{B}^\beta_k \\ \nn &+&
(\mathcal{H}^{(m-1)}_{\boldsymbol{A,B}})^i_{\alpha \beta}\boldsymbol{B}^\alpha_j \boldsymbol{A}^\beta_k -
\boldsymbol{A}^i_\alpha\Big( (\mathcal{H}^{(m-1)}_{\boldsymbol{A,B}})^\alpha_{j \beta} \boldsymbol{B}^\beta_k +
 (\mathcal{H}^{(m-1)}_{\boldsymbol{A,B}})^\alpha_{\beta k} \boldsymbol{B}^\beta_j \Big) \\  &-&\boldsymbol{B}^i_\alpha\Big( (\mathcal{H}^{m-1}_{\boldsymbol{A,B}})^\alpha_{j \beta} \boldsymbol{A}^\beta_k+
 (\mathcal{H}^{(m-1)}_{\boldsymbol{A,B}})^\alpha_{\beta k} \boldsymbol{A}^\beta_j \Big)
 \bigg) \ .
\end{eqnarray}
\end{lemma}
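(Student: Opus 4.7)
The plan is to obtain formula \eqref{eq:GenHaanLocal} by a direct local-coordinate evaluation of the recursive Definition \ref{GHBT}, using the fact that $\mathcal{H}^{(m-1)}_{\boldsymbol{A,B}}$ is a $(1,2)$ tensor field, so its components with respect to the natural frame $\{\partial/\partial x^i\}$ determine all of its values on vector fields. Concretely, I would set $X=\partial/\partial x^j$ and $Y=\partial/\partial x^k$ and evaluate each of the six terms on the right-hand side of \eqref{GHTn} separately, reading off the resulting $i$-th component.

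First, for the term $(\bs{AB}+\bs{BA})\,\mathcal{H}^{(m-1)}_{\boldsymbol{A,B}}(X,Y)$: this is the composition of the operator $\bs{AB}+\bs{BA}$ with the vector $\mathcal{H}^{(m-1)}_{\boldsymbol{A,B}}(\partial_j,\partial_k)$, so its $i$-th component is
\[
\sum_{\alpha,\beta}\Bigl(\bs{A}^i_\alpha\bs{B}^\alpha_\beta+\bs{B}^i_\alpha\bs{A}^\alpha_\beta\Bigr)(\mathcal{H}^{(m-1)}_{\boldsymbol{A,B}})^\beta_{jk},
\]
which accounts for the first two summands of \eqref{eq:GenHaanLocal}. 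Next, for $\mathcal{H}^{(m-1)}_{\boldsymbol{A,B}}(\bs{A}X,\bs{B}Y)$ and $\mathcal{H}^{(m-1)}_{\boldsymbol{A,B}}(\bs{B}X,\bs{A}Y)$, I use $C^\infty(M)$-linearity of the tensor in its two vector arguments to write $\bs{A}\partial_j=\sum_\alpha \bs{A}^\alpha_j\partial_\alpha$, obtaining contributions $(\mathcal{H}^{(m-1)}_{\boldsymbol{A,B}})^i_{\alpha\beta}\bs{A}^\alpha_j\bs{B}^\beta_k$ and $(\mathcal{H}^{(m-1)}_{\boldsymbol{A,B}})^i_{\alpha\beta}\bs{B}^\alpha_j\bs{A}^\beta_k$, i.e.\ the third and fourth summands.

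For the two $\bs{A}(\cdots)$-terms I proceed identically: $\bs{A}\,\mathcal{H}^{(m-1)}_{\boldsymbol{A,B}}(X,\bs{B}Y)$ has $i$-th component $\bs{A}^i_\alpha(\mathcal{H}^{(m-1)}_{\boldsymbol{A,B}})^\alpha_{j\beta}\bs{B}^\beta_k$, and similarly for the companion term with $\bs{B}X$ in the first slot; the two $\bs{B}(\cdots)$-terms are handled by the symmetric substitution $\bs{A}\leftrightarrow\bs{B}$. Summing the six contributions with their signs reproduces \eqref{eq:GenHaanLocal} exactly.

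The only subtle point, and really the main (mild) obstacle, is the bookkeeping of the skew-symmetric pair of inner contractions $(\cdot,\bs{B}Y)$ versus $(\bs{B}X,\cdot)$: one must be careful that the resulting index patterns on $(\mathcal{H}^{(m-1)}_{\boldsymbol{A,B}})^\alpha_{j\beta}$ and $(\mathcal{H}^{(m-1)}_{\boldsymbol{A,B}})^\alpha_{\beta k}$ reflect which slot carries the image $\bs{A}^\beta_j$ or $\bs{B}^\beta_k$. Once this bookkeeping is carried out, the identity follows at once, and no induction is needed beyond the tacit assumption that $\mathcal{H}^{(m-1)}_{\boldsymbol{A,B}}$ is itself a $(1,2)$ tensor field, which is automatic from Definition \ref{GHBT} applied at the previous level.
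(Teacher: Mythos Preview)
Your proposal is correct and follows essentially the same route as the paper: the authors simply state that the formula comes directly from writing the recursive Definition~\ref{GHBT} in local coordinates (starting from the Fr\"olicher--Nijenhuis bracket), which is precisely the term-by-term evaluation you outline. Your write-up is in fact more explicit than the paper's one-line justification, and the bookkeeping point you flag is the only thing requiring any care.
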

\begin{proof}
This formula comes directly from the expression in local coordinates of the Fr\"olicher-Nijenhuis bracket \eqref{eq:FNlocal}, applied to the recursive formula \eqref{GHTn}.
\end{proof}
If we take $\bs{A}=\bs{B}$, then the previous family of brackets reduces to the generalized torsions proposed independently in \cite{KS2017} and \cite{TT2017}. Here we remind
the main definition of that construction, since it will be crucial in the subsequent discussion.
\begin{definition} \label{df:mtorsion}
Let $\boldsymbol{A}:\mathfrak{X}(M)\to \mathfrak{X}(M)$ be a (1,1) tensor. The generalized Nijenhuis torsion of $\boldsymbol{A}$ of level $m$, for each integer $m\geq 1$, is the vector--valued $2$--form defined by
\bea \label{GNTn}
\nn \mathcal{\tau}^{(m)}_{\boldsymbol{A}}(X,Y):&=&\frac{1}{2^{m}}\mathcal{H}^{(m)}_{\boldsymbol{A,A}} (X,Y)=\boldsymbol{A}^2\mathcal{\tau}^{(m-1)}_{\boldsymbol{A}}(X,Y)+
\mathcal{\tau}^{(m-1)}_{\boldsymbol{A}}(\boldsymbol{A}X,\boldsymbol{A}Y) \\ &-& 
\boldsymbol{A}\Big(\mathcal{\tau}^{(m-1)}_{\boldsymbol{A}}(X,\boldsymbol{A}Y)+\mathcal{\tau}^{(m-1)}_{\boldsymbol{A}}(\boldsymbol{A}X,Y)\Big), \quad X,Y \in \mathfrak{X}(M) \ .
\eea
Here the notation $\tau_{\boldsymbol{A}}^{(0)}(X,Y):= [X,Y]$,  $\tau_{\boldsymbol{A}}^{(1)}(X,Y):=\tau_{\boldsymbol{A}}(X,Y)$
and $\tau_{\boldsymbol{A}}^{(2)}(X,Y):=\mathcal{H}_{\boldsymbol{A}}(X,Y)$ is used.
\end{definition}
\noi We also remind a useful formula, proved in \cite{KS2017} (Section 4.6), by means of a suitable polynomial representation of $(1,2)$ tensors:
\begin{equation} \label{eq:K17}
\mathcal{\tau}^{(m)}_{{\boldsymbol{A}}}(X,Y)=\sum_{p=0}^m \sum_{q=0}^{m} (-1)^{2m-p-q} \binom{m}{p} \binom{m}{q} \bs{A}^{p+q}\big[ \bs{A}^{m-p}X,\bs{A}^{m-q} Y\big ] \ .
\end{equation}
Alternatively, this formula can also be proved by induction over $m$.

Hereafter, we shall discuss some  relevant properties of the new brackets \eqref{GHTn}.
\begin{lemma}  Let $M$ be a differentiable manifold and $\boldsymbol{A},\boldsymbol{B}:\mathfrak{X}(M)\rightarrow \mathfrak{X}(M)$ be two operators. For any $X,Y \in \mathfrak{X}(M)$, we have
\begin{equation} \label{eq:HaanIBn}
\mathcal{H}^{(m)}_{\boldsymbol{\bs{I},B}} (X,Y)=\bs{0}  , \qquad \quad  \ m\in\mathbb{N}\backslash \{0\}\ .
\end{equation}
Moreover, if $[\boldsymbol{A}, \boldsymbol{B}]=0$, we have
\beq \label{eq:HfnAB}
\mathcal{H}^{(m)}_{f \bs{A,}\, g \bs{B}} (X,Y)= f^m\, g^m\, \mathcal{H}^{(m)}_{\boldsymbol{A,B}} (X,Y), \qquad\qquad m\in\mathbb{N}\backslash \{0,1\} \ .
\eeq
Consequently,
\begin{equation} \label{eq:HaanfIBn}
\mathcal{H}^{(m)}_{f \boldsymbol{\bs{I},B}} (X,Y)=\bs{0}  \ .
\end{equation}
\end{lemma}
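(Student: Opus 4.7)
My plan is to handle the three identities in sequence by induction on $m$, using the recursive Definition \ref{GHBT} as the engine. For \eqref{eq:HaanIBn}, I would induct on $m \geq 1$; the base case $m = 1$ is exactly \eqref{eq:id}, and for the inductive step, substituting $\boldsymbol{A} = \boldsymbol{I}$ in \eqref{GHTn} shows that every one of the seven terms on the right-hand side carries a factor $\mathcal{H}^{(m-1)}_{\boldsymbol{I},\boldsymbol{B}}(\cdots)$, which vanishes by the inductive hypothesis. Note that no commutativity assumption is needed here.

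For \eqref{eq:HfnAB}, I would induct on $m \geq 2$. The base case $m = 2$ is supplied by the earlier Corollary formula \eqref{eq:Fquadr}, specialized to $g = 1$ under the hypothesis $[\boldsymbol{A}, \boldsymbol{B}] = 0$. For the inductive step, observe first that $[f\boldsymbol{A}, \boldsymbol{B}] = f[\boldsymbol{A}, \boldsymbol{B}] = 0$, so the commutativity hypothesis is preserved under $\boldsymbol{A} \mapsto f\boldsymbol{A}$, and the inductive hypothesis can be applied to $\mathcal{H}^{(m-1)}_{f\boldsymbol{A},\boldsymbol{B}}$. Substituting $f\boldsymbol{A}$ for $\boldsymbol{A}$ in \eqref{GHTn}, the front sum of operators becomes $(f\boldsymbol{A})\boldsymbol{B} + \boldsymbol{B}(f\boldsymbol{A}) = f(\boldsymbol{AB} + \boldsymbol{BA})$; each argument-slot insertion $(f\boldsymbol{A})X$ or $(f\boldsymbol{A})Y$ pulls out a factor of $f$ by $C^\infty(M)$-linearity of the tensor field $\mathcal{H}^{(m-1)}_{f\boldsymbol{A}, \boldsymbol{B}}$; and each outer action $-(f\boldsymbol{A})(\cdots)$ contributes one more. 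In every one of the seven terms the operator substitution thus contributes exactly one power of $f$, which together with the factor $f^{m-1}$ provided by the inductive hypothesis $\mathcal{H}^{(m-1)}_{f\boldsymbol{A}, \boldsymbol{B}} = f^{m-1} \mathcal{H}^{(m-1)}_{\boldsymbol{A}, \boldsymbol{B}}$ produces a uniform overall factor $f^m$, yielding $\mathcal{H}^{(m)}_{f\boldsymbol{A}, \boldsymbol{B}} = f^m \mathcal{H}^{(m)}_{\boldsymbol{A}, \boldsymbol{B}}$.

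Finally, \eqref{eq:HaanfIBn} is immediate: since $[\boldsymbol{I}, \boldsymbol{B}] = 0$ holds trivially, \eqref{eq:HfnAB} with $\boldsymbol{A} = \boldsymbol{I}$ gives $\mathcal{H}^{(m)}_{f\boldsymbol{I}, \boldsymbol{B}} = f^m \mathcal{H}^{(m)}_{\boldsymbol{I}, \boldsymbol{B}}$, whose right-hand side vanishes by \eqref{eq:HaanIBn}. The only genuine bookkeeping obstacle is verifying uniformly across all seven terms of the recursion that precisely one $f$ emerges from the operator substitution and that no $X(f)$-type derivative corrections survive. These corrections, visible at level $m = 1$ in \eqref{eq:fABFN}, are absent at higher levels precisely because each $\mathcal{H}^{(m-1)}_{f\boldsymbol{A}, \boldsymbol{B}}$ is an honest tensor field and hence $C^\infty(M)$-multilinear in its vector arguments; this is what forces the base level to be $m = 2$ and why the commutativity $[\boldsymbol{A}, \boldsymbol{B}] = 0$ enters the argument only through that base case.
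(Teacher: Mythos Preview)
Your proposal is correct and follows essentially the same approach as the paper: induction on $m$ via the recursive Definition \ref{GHBT}, with base cases supplied by \eqref{eq:id} (or equivalently Corollary \ref{cor:HIB}) and \eqref{eq:Fquadr}, and the third identity deduced from the first two. The paper's proof is quite terse, and your write-up simply makes explicit the term-by-term verification of the inductive step that the paper leaves implicit.
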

\begin{proof}
Relation \eqref{eq:HaanIBn}  is obtained by induction over $m$, starting with the case $m=1$ already proved in Eq. \eqref{eq:id}. 
Similarly, property \eqref{eq:HfnAB} can be proved by induction over $m\geq 2$; the case $m=2$ simply requires a direct calculation. Equation  \eqref{eq:HaanfIBn} is an immediate consequence of Eqs. \eqref{eq:HaanIBn} and \eqref{eq:HfnAB}.
\end{proof}
\begin{proposition} Let $M$ be a differentiable manifold and $\boldsymbol{A},\boldsymbol{B}:\mathfrak{X}(M)\rightarrow \mathfrak{X}(M)$  two commuting operators. For any $f,g,h,k \in C^\infty(M)$ , $X,Y \in \mathfrak{X}(M)$ and for each integer $m\geq 2$, we have
\begin{equation} \label{eq:34}
\mathcal{H}^{(m)}_{f \boldsymbol{I}+g\boldsymbol{A},h\boldsymbol{I}+k\boldsymbol{B}} (X,Y)=g^mk^m\mathcal{H}^{(m)}_{\boldsymbol{A},\boldsymbol{B}} (X,Y)  \ .
\end{equation}
\end{proposition}
\begin{proof}
The formula can be proved by  induction over $m$, starting with the case $h=0$ and $k=1$.
Then, the result follows as a consequence of  the symmetry w.r.t. the interchange of the first and  second operator.
\end{proof}
\begin{corollary} \label{corollary:3}
Let $\bs{A}:\mathfrak{X}(M)\to \mathfrak{X}(M)$ be an operator. Then, for all $f\in C^{\infty}(M)$ the relations 
\begin{eqnarray} \label{tauind}
\mathcal{\tau}^{(m)}_{\bs{I}}(X,Y) &=&\bs{0}\ , \quad\qquad \qquad \qquad m\in\mathbb{N}\backslash \{0\} \\\label{tauind2}
\mathcal{\tau}^{(m)}_{f \bs{I}+ g \bs{A}}(X,Y)&=& g^{2m}\mathcal{\tau}^{(m)}_{\boldsymbol{A}}(X,Y), \qquad m\in\mathbb{N}\backslash \{0,1\} \ .
\end{eqnarray}
hold.
\end{corollary}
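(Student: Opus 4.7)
The plan is to derive both identities as immediate specializations of results already proved in this section, by setting the second operator equal to the first in the binary statements.

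For the identity \eqref{tauind}, I would start from the definitional relation $\mathcal{\tau}^{(m)}_{\boldsymbol{A}}(X,Y) = \tfrac{1}{2^m}\mathcal{H}^{(m)}_{\boldsymbol{A},\boldsymbol{A}}(X,Y)$ furnished by Definition \ref{df:mtorsion}. Specializing to $\boldsymbol{A}=\boldsymbol{I}$ gives $\mathcal{\tau}^{(m)}_{\boldsymbol{I}}(X,Y)=\tfrac{1}{2^m}\mathcal{H}^{(m)}_{\boldsymbol{I},\boldsymbol{I}}(X,Y)$, and the vanishing then follows directly from \eqref{eq:HaanIBn} applied with $\boldsymbol{B}=\boldsymbol{I}$. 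So the first part is a one-line corollary of the binary vanishing lemma.

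For the identity \eqref{tauind2}, the key is to recognize that we may apply Proposition \eqref{eq:34} with $\boldsymbol{B}=\boldsymbol{A}$, $h=f$, $k=g$. The commutativity hypothesis $[\boldsymbol{A},\boldsymbol{B}]=0$ required by that proposition is trivially satisfied since $\boldsymbol{B}=\boldsymbol{A}$. The proposition then yields
\[
\mathcal{H}^{(m)}_{f\boldsymbol{I}+g\boldsymbol{A},\,f\boldsymbol{I}+g\boldsymbol{A}}(X,Y)=g^{m}g^{m}\,\mathcal{H}^{(m)}_{\boldsymbol{A},\boldsymbol{A}}(X,Y)=g^{2m}\,\mathcal{H}^{(m)}_{\boldsymbol{A},\boldsymbol{A}}(X,Y).
\]
Dividing both sides by $2^m$ and invoking Definition \ref{df:mtorsion} on each side converts this into the desired relation $\mathcal{\tau}^{(m)}_{f\boldsymbol{I}+g\boldsymbol{A}}(X,Y)=g^{2m}\,\mathcal{\tau}^{(m)}_{\boldsymbol{A}}(X,Y)$.

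There is no real obstacle here, since all the hard work is encoded in the preceding Lemma and Proposition; the only point requiring a bit of care is to confirm that the constraint $m\geq 2$ in \eqref{tauind2} matches the hypothesis $m\in\mathbb{N}\setminus\{0,1\}$ of the Proposition, and that the trivial commutator $[\boldsymbol{A},\boldsymbol{A}]=0$ legitimately fulfills the commutativity assumption. Hence the corollary reduces to a direct substitution in two previously established identities.
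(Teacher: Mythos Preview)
Your proof is correct and follows essentially the same route as the paper: both derive the corollary by specializing the binary identity \eqref{eq:34} to the diagonal case $\boldsymbol{B}=\boldsymbol{A}$ and then translating via Definition~\ref{df:mtorsion}. Your parameter choice $h=f$, $k=g$ is in fact more direct than the substitution the paper records, and your separate treatment of \eqref{tauind} via \eqref{eq:HaanIBn} has the minor advantage of covering the case $m=1$ without needing to invoke the Nijenhuis torsion of $\boldsymbol{I}$ separately.
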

\begin{proof}
Eqs. \eqref{tauind} and \eqref{tauind2} are obtained by imposing  $g=1$, $h=f$, $k=1$, $\bs{B}=\bs{A}$ into Eq. \eqref{eq:34}.
\end{proof}

\noi The value $m=1$  has been excluded in Eq. \eqref{tauind2}, since for this case a separate formula  for the Nijenhuis torsion holds:
\begin{equation} \label{eq:fNtorsion}
\mathcal{T}_{g \boldsymbol{A}}(X,Y)=g^2\mathcal{T}_{\boldsymbol{A}}(X,Y)
-g \big(\bs{\sf{T}}(\bs{A},\bs{A})-\bs{\sf{T}}^T(\bs{A},\bs{A})\big)(\rd g,X,Y) \ .
\end{equation}
This equation can be easily obtained from Eq. \eqref{eq:fABFN}, choosing $f= g$ and $\bs{B}=  \bs{A}$.

Let us consider in more detail the properties of  the Haantjes bracket of level $m=2$ of  two arbitrary commuting operators. 
Hereafter the notation $\mathcal{H}_ {\boldsymbol{A,B}}(X,Y):= \mathcal{H}^{(2)}_{\boldsymbol{A,B}} (X,Y)$
will be used.
\begin{proposition} \label{prop11}
Let $M$ be a differentiable manifold, $f, g\in C^\infty(M)$ and let $\boldsymbol{A},\boldsymbol{B}:\mathfrak{X}(M)\rightarrow \mathfrak{X}(M)$ be two $(1,1)$ tensors.  Then, the following identity holds:
\begin{multline} \label{eq:prop11}
\mathcal{H}_{f \bs{A,}\,g\bs{B}} (X,Y)
= f^2 g^2\, \mathcal{H}_{\boldsymbol{A,B}} (X,Y)\\
+
f g^2\Bigg (\bs{I}\otimes  [\bs{A},\bs{B}[\boldsymbol{A,B}]]-\bs{B}\otimes  [\bs{A},[\boldsymbol{A,B}]]-
\bs{A}\otimes \bs{B} [\boldsymbol{A,B}]+\bs{BA}\otimes [\boldsymbol{A,B}]\Bigg)( \rd f, X,Y) \\
-
f g^2\Bigg (\bs{I}\otimes  [\bs{A},\bs{B}[\boldsymbol{A,B}]]-\bs{B}\otimes  [\bs{A},[\boldsymbol{A,B}]]-
\bs{A}\otimes \bs{B} [\boldsymbol{A,B}]+\bs{BA}\otimes [\boldsymbol{A,B}]\Bigg)( \rd f, Y,X) \\
+
f^2 g\Bigg (\bs{I}\otimes  [\bs{B},\bs{A}[\boldsymbol{B,A}]]-\bs{A}\otimes  [\bs{B},[\boldsymbol{B,A}]]-
\bs{B}\otimes \bs{A} [\boldsymbol{B,A}]+\bs{AB}\otimes [\boldsymbol{B,A}]\Bigg)( \rd g, X,Y) \\
-
f^2 g\Bigg (\bs{I}\otimes  [\bs{B},\bs{A}[\boldsymbol{B,A}]]-\bs{A}\otimes  [\bs{B},[\boldsymbol{B,A}]]-
\bs{B}\otimes \bs{A} [\boldsymbol{B,A}]+\bs{AB}\otimes [\boldsymbol{B,A}]\Bigg)( \rd g, Y,X) \ .
\end{multline}
\end{proposition}
Formula \eqref{eq:prop11} can be derived by a direct (although cumbersome) calculation.

From Definition \ref{GHBT},  by means of some algebraic manipulations one can derive another useful result.
\begin{lemma}\label{lm:HaanApfI}
Let $M$ be a differentiable manifold, $f\in C^\infty(M)$ and let $\boldsymbol{A},\boldsymbol{B}:\mathfrak{X}(M)\rightarrow \mathfrak{X}(M)$ be two $(1,1)$ tensors.  Then,  for all $f\in C^\infty(M)$ we have
\begin{eqnarray*}
\mathcal{H}_{f \boldsymbol{I}+\boldsymbol{A},\boldsymbol{B}} (X,Y)
=\mathcal{H}_{\boldsymbol{A},\boldsymbol{B}} (X,Y)&+&(\bs{I}\otimes \bs{B}+\bs{B}\otimes \bs{I})[\bs{A},\bs{B}](\rd f,X,Y)\\
&-&
(\bs{I}\otimes \bs{B}+\bs{B}\otimes \bs{I})[\bs{A},\bs{B}](\rd f,Y,X) \ .
\end{eqnarray*}

\end{lemma}
%
\noi The following result clarifies the geometric meaning of the Haantjes  bracket of level 2. 
\begin{lemma} \label{theo:HBT}
 Let $M$ be a  differentiable manifold  and $\bs{A}$, $\bs{B}:\mathfrak{X}(M)\rightarrow \mathfrak{X}(M)$  two $(1,1)$ tensors which can be simultaneously diagonalized in a local chart of $M$.
Then for any $X,Y \in \mathfrak{X}(M)$, the  Haantjes  bracket $\mathcal{H}_{\boldsymbol{A,B}} (X,Y)$ vanishes.
\end{lemma}
\begin{proof}
We denote by $\bs{A}^i_i$ and $\bs{B}^j_j$ the non-vanishing components of $\bs{A}$ and $\bs{B}$ respectively. Then, in a local chart where the operators  diagonalize simultaneously, using Eq. \eqref{eq:GenHaanLocal}  we get, by means of a direct calculation,
$$
(\mathcal{H}_{\boldsymbol{A,B}})^i_{j k}=\llbracket\bs{A},\bs{B}\rrbracket^i_{jk} \bigg( 2 \bs{A}^i_i\bs{B}^i_i+\bs{A}^j_j\bs{B}^k_k+\bs{A}^k_k \bs{B}^j_j-\bs{A}^i_i (\bs{B}^k_k+\bs{B}^j_j ) -\bs{B}^i_i (\bs{A}^k_k +\bs{A}^j_j) \bigg)
$$
where $\llbracket\bs{A},\bs{B}\rrbracket^i_{jk}$ is explicitly given in formula \eqref{eq:FNlocal}. If  $i$, $j$, $k$ are all different, then $ \llbracket \bs{A},\bs{B} \rrbracket^i_{jk}=0$. Moreover, if $i=k \neq j$ or $i=j\neq k$, the sum in the r.h.s. vanishes as well.
\end{proof}
The latter property, which does not hold in the case of the  Fr\"olicher-Nijenhuis bracket, is analogous to the one valid for the standard Haantjes torsion of diagonalizable operators. In fact, the Haantjes torsion vanishes, whereas the Nijenhuis one does not necessarily.

\subsection{ Haantjes brackets and Haantjes modules}

In the following analysis, we shall illustrate the algebraic meaning of Haantjes brackets of level 2. As we will show, they play a crucial role in the study of the $C^{\infty}(M)$-modules of Haantjes operators, that we shall call \textit{Haantjes modules}.

\begin{definition}\label{def:HM}
A Haantjes module is a pair    $(M, \mathscr{H}_{\mathcal{M}})$ which satisfies the following  conditions:
\begin{itemize}
\item
$M$ is a differentiable manifold of dimension $\mathrm{n}$;
\item
$ \mathscr{H}_{\mathcal{M}}$ is a set of Haantjes  operators $\boldsymbol{K}:\mathfrak{X}(M)\rightarrow \mathfrak{X}(M)$   such that
\begin{equation}\label{eq:Hmod}
\mathcal{H}_{\big( f\boldsymbol{K_1} +
                             g\boldsymbol{K}_2\big)}(X,Y)= \boldsymbol{0}
 \ , \qquad\forall X, Y \in \mathfrak{X}(M) \ , \quad \forall f,g \in C^\infty(M)\  ,\quad \forall \boldsymbol{K}_1,\boldsymbol{K}_2 \in  \mathscr{H}_{\mathcal{M}}.
\end{equation}
\end{itemize}
\end{definition}
Thus, a Haantjes module is a free module of Haantjes operators  over the ring of smooth functions on $M$. If property \eqref{eq:Hmod} is satisfied only when $f,g$ are real constants, we shall use the denomination of \textit{Haantjes vector space}.

\vspace{3mm}

We determine now the tensorial compatibility conditions ensuring the existence of the Haantjes module generated by two  arbitrary  Haantjes operators $\bs{A}$, $\bs{B}:\mathfrak{X}(M)\to \mathfrak{X}(M)$. First, we construct these conditions in full generality, namely for \textit{non-semisimple}, non-commuting Haantjes operators.  Then, we shall restrict to the important case of semisimple, commuting operators, which arises for instance in Hamiltonian classical mechanics, in the discussion of separable systems \cite{RTT2020prepr}, \cite{TT2016prepr}. 

\subsubsection{The general case}
We shall start our analysis with 
the following identity, valid for all $f,g$ $\in$ $C^\infty(M)$, $X,Y$$\in$ $\mathfrak{X}(M)$:
\begin{equation} \label{eq:HaanA+B}
\mathcal{H}_{f\bs{A}+g\bs{B}}(X,Y)=f^4\mathcal{H}_{\boldsymbol{A}}(X,Y)+g^4\mathcal{H}_{\boldsymbol{B}}(X,Y)+
\sigma_{f\bs{A},g\bs{B}}(X,Y) \ ,
\end{equation}
where  
\begin{equation}
 \sigma_{\boldsymbol{A,B}} (X,Y):=\mathcal{H}_{\boldsymbol{A,B}} (X,Y)+\mathcal{H}_1(\boldsymbol{A,B}) (X,Y) 
+ \mathcal{H}_2(\boldsymbol{A,B}) (X,Y)+\mathcal{H}_2 (\boldsymbol{B,A}) (X,Y)
\end{equation}
   and  $\mathcal{H}_1 (\boldsymbol{A,B})$,  $\mathcal{H}_2 (\boldsymbol{A,B})$ are the  vector valued $2$-forms
\begin{equation}\label{eq:H12}
\begin{split}
&\mathcal{H}_1(\boldsymbol{A,B})(X,Y):=\bs{B}^2\tau_{\bs{A}}(X,Y)+\tau_{\bs{A}}(\bs{B}X,\bs{B}Y) -\bs{B}\big( \tau_{\bs{A}}(\bs{B}X,Y)+\tau_{\bs{A}}(X,\bs{B}Y)\big) \\
&\qquad\qquad+\bs{A}^2\tau_{\bs{B}}(X,Y)+\tau_{\bs{B}}(\bs{A}X,\bs{A}Y) -\bs{A}\big( \tau_{\bs{B}}(\bs{A}X,Y)+\tau_{\bs{B}}(X,\bs{A}Y)\big )\ ,\\
&  \mathcal{H}_2(\boldsymbol{A,B}) (X,Y):
=(\bs{A}\bs{B}+\bs{B}\bs{A})\tau_{\bs{A}}(X,Y)+
\tau_{\bs{A}}(\bs{A}X,\bs{BY}) +\tau_{\bs{A}}(\bs{B}X,\bs{A}Y) \\ 
&\qquad\qquad-\bs{A}\big(  \tau_{\bs{A}}(\bs{B}X,Y)+\tau_{\bs{A}}(X,\bs{B}Y)\big)
-\bs{B}\big(  \tau_{\bs{A}}(\bs{A}X,Y)+\tau_{\bs{A}}(X,\bs{A}Y\big)\big)\\
&+ \bs{A}^2 \llbracket\bs{A},\bs{B}\rrbracket(X,Y)+
\llbracket\bs{A},\bs{B}\rrbracket(\bs{A}X,\bs{A}Y)
-\bs{A}\big( \llbracket\bs{A},\bs{B}\rrbracket(\bs{A}X,Y)+\llbracket\bs{A},\bs{B}\rrbracket(X,\bs{A}Y)\big) 
\ .
 \end{split}
 \end{equation}
\noi They represent new, auxiliary brackets which complement the role of the  Haantjes bracket $\mathcal{H}_{\bs{A},\bs{B}}$. 

\noi Now, some technical results are in order.
Let us introduce the three vector-valued $3$--tensors 
\[
\bs{\sf{T}}_i: \mathfrak{X}^*(M) \times \mathfrak{X}(M)\times \mathfrak{X}(M)\rightarrow \mathfrak{X}(M), (\alpha,X,Y)\mapsto \bs{\sf{T}}_i(\alpha,X,Y),
\quad i=1,2,3:
\]
\begin{equation}
\bs{\sf{T}}_1(\bs{A},\bs{B})(\alpha,X,Y):=\bigg(  \Big( \bs{\sf{T}}(\bs{A},\bs{B})+\bs{\sf{T}}'(\bs{A},\bs{B})\Big)[ \bs{A},\bs{B} ]\bigg)(\alpha  , X, Y)  \ ,
 \end{equation}
\begin{equation}
\begin{split}
 \bs{\sf{T}}_2(\bs{A},\bs{B})(\alpha,X,Y):=&  \bigg(  \Big( \bs{\sf{T}}(\bs{A},\bs{A})+\bs{\sf{T}}'(\bs{A},\bs{A})\Big)[ \bs{A},\bs{B} ]\bigg)(\alpha  , X, Y)  \ ,  \\
\bs{\sf{T}}_3(\bs{A},\bs{B})(\alpha,X,Y):=& 
\bigg(  \Big( \bs{\sf{T}}(\bs{B},\bs{B})+\bs{\sf{T}}'(\bs{B},\bs{B})\Big)[ \bs{A},\bs{B} ]\bigg)(\alpha  , X, Y)  \ ,
 \end{split}
\end{equation}
where $\bs{\sf{T}}(\bs{A},\bs{B})$ is defined in Eq. \eqref{eq:T} and
\begin{equation}
\bs{\sf{T}}'(\bs{A},\bs{B})(\alpha  , X, Y) :=( \bs{AB} \otimes \bs{I}-\bs{A}\otimes \bs{B})(\alpha,X,Y) \ .
\end{equation}
These brackets satisfy the relations
\begin{equation}\label{eq:Tprop}
\bs{\sf{T}}_1(\bs{A},\bs{B})=- \bs{\sf{T}}_1(\bs{B},\bs{A}) \ ,
\qquad \bs{\sf{T}}_3(\bs{A},\bs{B})=- \bs{\sf{T}}_2(\bs{B},\bs{A}) \ .
\end{equation}
\begin{lemma} \label{Lemma:19}
 Let  $\boldsymbol{A},\boldsymbol{B}:\mathfrak{X}(M)\rightarrow \mathfrak{X}(M)$ be two   operators. 
 For all $f,g\in C^\infty(M)$ and $X,Y \in \mathfrak{X}(M)$, the following identities hold:
\begin{equation} \label{eq:27}
\begin{split}
&\mathcal{H}_{f\boldsymbol{A},g\bs{B}}(X,Y)+\mathcal{H}_1(f\boldsymbol{A},g\bs{B})(X,Y)= 
f^2 g^2\Big(\mathcal{H}_{\boldsymbol{A},\bs{B}} (X,Y)+ \mathcal{H}_1(\boldsymbol{A,B})(X,Y) \Big)\\
&+f g^2\Big(\bs{T}_1(\bs{A},\bs{B})-\bs{T}_1^T(\bs{A},\bs{B})\Big)(\rd f,X,Y)-f^2 g\Big(\bs{T}_1(\bs{A},\bs{B})-\bs{T}_1^T(\bs{A},\bs{B})\Big)(\rd g,X,Y)\\
&\mathcal{H}_2(f\boldsymbol{A},g\bs{B})(X,Y)=
 f^3 g\,\mathcal{H}_2(\boldsymbol{A,B})(X,Y) \big)
 +f^2g\, 
\Big(\bs{T}_2(\bs{A},\bs{B})-\bs{T}_2^T(\bs{A},\bs{B})\Big)(\rd f,Y,X)\big)\\
&\qquad\qquad\qquad\qquad-f^3\Big(\bs{T}_2(\bs{A},\bs{B})-\bs{T}_2^T(\bs{A},\bs{B})\Big)(\rd g,X,Y) \\
&\mathcal{H}_2(g\bs{B},f\boldsymbol{A})(X,Y)=
 f g^3\,\mathcal{H}_2(\bs{B},\boldsymbol{A})(X,Y) \big)
 +g^3\, 
\Big(\bs{T}_3(\bs{A},\bs{B})-\bs{T}_3^T(\bs{A},\bs{B})\Big)(\rd f,Y,X)\big)\\
&\qquad\qquad\qquad\qquad-fg^2\Big(\bs{T}_3(\bs{A},\bs{B})-\bs{T}_3^T(\bs{A},\bs{B})\Big)(\rd g,X,Y).
 \end{split}
\end{equation}
\end{lemma}

\noi From Eqs. \eqref{eq:27} we get the identity  
\begin{equation} \label{eq:sigmaAB}
\begin{split}
\sigma_{f\bs{A},g\bs{B}}(X,Y)&=f^2g^2 \Big(  \mathcal{H}(\boldsymbol{A,B})(X,Y)+\mathcal{H}_1(\boldsymbol{A,B})(X,Y)\Big ) \\
&+f^3 g \, \mathcal{H}_2(\boldsymbol{A,B})(X,Y) + fg^3 \, \mathcal{H}_2(\boldsymbol{B,A})(X,Y) \\
&+\left(\Big(g^3 (\bs{\sf{T}}_3-\bs{\sf{T}}_3^T)+g^2 f (\bs{\sf{T}}_1-\bs{\sf{T}}_1^T)+g f^2 (\bs{\sf{T}}_2-\bs{\sf{T}}_2^T)\Big)(\boldsymbol{A,B})\right) (\rd f, X,Y)  \\
&+\left(\Big(f^3 (\bs{\sf{T}}_3-\bs{\sf{T}}_3^T)+f^2g \,(\bs{\sf{T}}_1-\bs{\sf{T}}_1^T)+fg^2 (\bs{\sf{T}}_2-\bs{\sf{T}}_2^T)\Big)(\boldsymbol{B,A})\right) (\rd g, X,Y) \ . 
\end{split}
 \end{equation}
 From Eqs. \eqref{eq:HaanA+B}, \eqref{eq:sigmaAB} and \eqref{eq:Tprop},   taking into account the previous discussion, we obtain our tensorial characterization of Haantjes modules. 
\begin{theorem}
Let $M$ be a differentiable manifold and let $\boldsymbol{A},\boldsymbol{B}:\mathfrak{X}(M)\rightarrow \mathfrak{X}(M)$ be two Haantjes operators. Then, $\bs{A}$ and $\bs{B}$ generate a Haantjes module if and only if the differential  conditions 
\beq \label{eq:DC}
  \mathcal{H}_{\boldsymbol{A,B}} +\mathcal{H}_1(\boldsymbol{A,B})= 0 \ , \qquad
  \mathcal{H}_2(\boldsymbol{A,B}) = 0 \ , \qquad \mathcal{H}_2(\boldsymbol{B,A})  =0 \ ,
\eeq
together with the algebraic conditions 
\beq\label{eq:AC}
 \bs{\sf{T}}_1(\boldsymbol{A,B}) =\bs{\sf{T}}_1^T(\boldsymbol{A,B}) \ , \qquad 
 \bs{\sf{T}}_2(\boldsymbol{A,B}) =\bs{\sf{T}}_2^T(\boldsymbol{A,B}) \ ,
\qquad \bs{\sf{T}}_2(\boldsymbol{B,A}) = \bs{\sf{T}}_2^T(\boldsymbol{B,A})
 \eeq
are satisfied. In particular, $\bs{A}$ and $\bs{B}$ generate a Haantjes vector space if and only if the differential conditions \eqref{eq:DC} are fulfilled.
\end{theorem}

\begin{corollary} \label{Ch:21}
Let $\bs{A}$ and $\bs{B}$ two commuting Haantjes operators. 
They generate a  Haantjes module if and only if conditions \eqref{eq:DC} are fulfilled.
\end{corollary}
Some examples of applications are in order.
\begin{itemize}
\item
\textit{Haantjes moduli}: In \cite{TT2016prepr}, a  Haantjes module of operators for the Post-Winternitz  superintegrable system has been constructed. The two generators of the module, which do not commute, fulfill conditions \eqref{eq:DC} and \eqref{eq:AC}.
\item
\textit{Haantjes vector spaces}: In  \cite{FeKhu}, (2+1)-dimensional hydrodynamic type systems of the form $u_t=\bs{A}(u)u_x+\bs{B}(u)u_y$, where $u=u(x,y,t)$ have been considered. In the case of the generalized Benney system  and of an isoentropic gas, the two associated operators $\bs{A}(u)$ and $\bs{B}(u)$   do not commute. Also, they fulfil  Eq. \eqref{eq:DC} but not \eqref{eq:AC}.  Therefore, these operators generate a Haantjes vector space.
\end{itemize}

\subsubsection{The semisimple, Abelian case} In the  previous analysis, the Haantjes operators $\bs{A}$ and $\bs{B}$ are not supposed to be semisimple. Let us show that if $\bs{A}$ and $\bs{B}$ commute and they are semisimple, then the three differential conditions \eqref{eq:DC} reduce to the vanishing of the Haantjes bracket $\mathcal{H}_{\boldsymbol{A,B}}$. 
Although this is a special case of the previous construction, it requires an \textit{ad hoc} analysis.

To this aim,   we need to evaluate  the Frolicher-Nijenhuis bracket, as well as the brackets  $\mathcal{H}_{\boldsymbol{A,B}}$, $\mathcal{H}_1(\boldsymbol{A,B})$ and  $\mathcal{H}_2(\boldsymbol{A,B})$ over two common eigenvectors $X_{\mu }$ and $Y_{\nu }$ of two (arbitrary) operators $\bs{A}$ and $\bs{B}$ (the details of the calculation are reported in Appendix 6.3).

\begin{proposition} \label{pr:H1230}
Let $\bs{A}$ and $\bs{B}$ two  Haantjes operators and  $X_\mu$, $Y_\nu$ two common eigenvectors. Then
\begin{equation}\label{eq:H120}
\mathcal{H}_1(\boldsymbol{A,B})(X_{\mu},Y_{\nu})=\bs{0} \ .
\end{equation}
In addition, if $\bs{A}$ and $\bs{B}$ also commute, then 
\begin{equation}\label{eq:H20}
\mathcal{H}_2(\boldsymbol{A,B})(X_{\mu},Y_{\nu})=\bs{0}\ , \qquad \mathcal{H}_2(\boldsymbol{B,A})(X_{\mu},Y_{\nu})=\bs{0} \ .
\end{equation}
\end{proposition}
\begin{proof}
From Eq. \eqref{eq:ABauto} in Appendix 6.3 and the assumption that $\bs{A}$ and $\bs{B}$ are Haantjes operators it follows that 
\begin{equation}
[X_\mu, Y_\nu ] \in \big (\ker(\bs{A}-\mu_1\bs{I}) \oplus  \ker(\bs{A}-\nu_1\bs{I}) \big) \cap
 \big (\ker  (\bs{B}-\mu_2\bs{I}) \oplus \ker (\bs{B}-\nu_2\bs{I})   \big) \ .
\end{equation}

Consequently, it is  evident from Eqs. \eqref{eq:H12XY}   
that Eq. \eqref{eq:H120} holds for any pair of Haantjes operators $\bs{A}$ and $\bs{B}$. In addition, if $\bs{A}$ and $\bs{B}$ \emph{commute}, from Eqs. \eqref{eq:H12XY}  it follows that Eqs. \eqref{eq:H20} also hold.
\end{proof}
 
We can now formulate our main result concerning the characterization of Haantjes modules.
\begin{theorem}\label{th:23}
Let $\bs{A},\bs{B}: \mathfrak{X}(M)\to \mathfrak{X}(M)$ two  commuting semisimple Haantjes operators. 
They generate a  Haantjes module if and only if
\begin{equation}
\mathcal{H}_{\boldsymbol{A,B}} (X,Y)=\bs{0} \qquad\qquad \forall X,Y\in \mathfrak{X}(M) \ .
\end{equation}
\end{theorem}
\begin{proof}
As $\bs{A}$ and $\bs{B}$ are semisimple  commuting operators, they share a local eigenframe. In this eigenframe the two operators take simultaneously a diagonal form. Since they are also Haantjes operators, all of the brackets   $\mathcal{H}_1(\boldsymbol{A,B}) $, $\mathcal{H}_2(\boldsymbol{A,B})$ and $\mathcal{H}_2(\boldsymbol{B,A})$ identically vanish in view of Proposition \ref{pr:H1230}.  Thus, from Corollary  \ref{Ch:21} the result follows.
\end{proof}

\subsection{Spectral Analysis}
At this stage, we wish to discuss some of the spectral properties of non-semisimple  operators  on a manifold from the perspective of the theory of higher level Nijenhuis torsions. Let us denote  by $Spec(\boldsymbol{A}):=\{ \lambda_1(\boldsymbol{x}),
 \lambda_2(\boldsymbol{x}), \ldots, \lambda_s(\boldsymbol{x})\}$  the set of the  distinct eigenvalues of an operator $\boldsymbol{A}: \mathfrak{X}(M)\to \mathfrak{X}(M)$. In the forthcoming considerations, we shall always assume that these eigenvalues are \emph{real} and pointwise distinct. We denote by
 \begin{equation} \label{eq:DisL}
 \mathcal{D}_i(\boldsymbol{x}) = \ker \Big(\boldsymbol{A}(\boldsymbol{x})-\lambda_i(\boldsymbol{x})\boldsymbol{I}\Big)^{\rho_i}, \qquad i=1,\ldots,s
 \end{equation}
the \textit{i}-th generalized eigen-distribution of  index $\rho_i$, that is  the distribution of all the  generalized eigenvectors  corresponding to the eigenvalue $\lambda_i$. In Eq. \eqref{eq:DisL}, $\rho_i$ stands for the Riesz index of $\lambda_i$, which is the minimum integer such that
\begin{equation} \label{eq:Riesz}
\ker \Big(\boldsymbol{A}(\boldsymbol{x})-\lambda_i(\boldsymbol{x})\boldsymbol{I}\Big)^{\rho_i}\equiv \ker \Big(\boldsymbol{A}(\boldsymbol{x})-\lambda_i(\boldsymbol{x})\boldsymbol{I}\Big)^{\rho_{i}+1} \ ;
\end{equation}
 we also assume that $\rho_i$ is (locally) independent of $\boldsymbol{x}$.
When $\rho_i=1$,  $\mathcal{D}_i$ is a proper eigen-distribution.
Hereafter, unless differently stated, we shall use the adjective ``generalized'' to include the case of proper eigen-distributions as well.

In several applications, it is also useful to consider the action of our generalized torsions of any level on the generalized eigenvectors of $\boldsymbol{A}$.
Inspired by a formula for the Nijenhuis torsion evaluated on eigenvectors (proved in Appendix 6.2), we construct a generalized expansion, in terms of commutators, for the torsions of any level. It can be proved by induction over the integers $m\geq 2$ via a direct procedure.
\begin{proposition} \label{lm:TmL2autog}
Let $\boldsymbol{A}$ be a (1,1) tensor and $X_\alpha$, $Y_\beta$ be two of its 
 generalized eigenvectors  of $\mathcal{D}_\mu$,  $\mathcal{D}_\nu$, respectively.
 Then, for any integer $m\geq 2$ the following formula holds:
\begin{equation}\label{eq:TmL2autog}
\mathcal{\tau}^{(m)}_ {\boldsymbol{A}} (X_\alpha, Y_\beta)=
\sum_{i,j=0}^{m}(-1)^{i+j}\binom{m}{i}\binom{m}{j} \Big(\boldsymbol{A}-\mu\mathbf{I}\Big)^{m-i}\Big(\boldsymbol{A}-\nu \mathbf{I}\Big)^{m-j}
 [X_{\alpha-i}, Y_{\beta-j}].
\end{equation}
\end{proposition}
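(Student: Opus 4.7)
The proof proceeds by induction on $m \geq 2$, based on a clean recursion for Jordan vector pairs. Using Definition \ref{df:mtorsion} together with the Jordan chain identities \eqref{eq:Lautg} and the tensoriality of $\mathcal{\tau}^{(m-1)}_{\boldsymbol{A}}$ (valid for $m-1 \geq 1$), a direct expansion yields
\begin{multline*}
\mathcal{\tau}^{(m)}_{\boldsymbol{A}}(X_\alpha, Y_\beta) = (\boldsymbol{A} - \mu\boldsymbol{I})(\boldsymbol{A}-\nu\boldsymbol{I})\,\mathcal{\tau}^{(m-1)}_{\boldsymbol{A}}(X_\alpha, Y_\beta) - (\boldsymbol{A}-\nu\boldsymbol{I})\,\mathcal{\tau}^{(m-1)}_{\boldsymbol{A}}(X_{\alpha-1}, Y_\beta) \\
- (\boldsymbol{A}-\mu\boldsymbol{I})\,\mathcal{\tau}^{(m-1)}_{\boldsymbol{A}}(X_\alpha, Y_{\beta-1}) + \mathcal{\tau}^{(m-1)}_{\boldsymbol{A}}(X_{\alpha-1}, Y_{\beta-1}).
\end{multline*}
This may be recognised as the action on the pair $(\alpha,\beta)$ of the operator $\mathfrak{D} := \bigl((\boldsymbol{A}-\mu\boldsymbol{I}) - \sigma_\alpha\bigr)\bigl((\boldsymbol{A}-\nu\boldsymbol{I}) - \sigma_\beta\bigr)$, where $\sigma_\alpha, \sigma_\beta$ denote the index shifts $\alpha \mapsto \alpha-1$, $\beta \mapsto \beta-1$. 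All four elementary factors mutually commute, since $(\boldsymbol{A}-\mu\boldsymbol{I})$ and $(\boldsymbol{A}-\nu\boldsymbol{I})$ commute as polynomials in $\boldsymbol{A}$ and the index shifts do not affect the scalar functions $\mu, \nu$.

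For the inductive step $m \geq 3$, I substitute the inductive hypothesis for $\mathcal{\tau}^{(m-1)}_{\boldsymbol{A}}$ into the four terms of the recursion. The coefficient of $(\boldsymbol{A}-\mu\boldsymbol{I})^{m-i}(\boldsymbol{A}-\nu\boldsymbol{I})^{m-j}[X_{\alpha-i},Y_{\beta-j}]$ collected from those contributions equals
\begin{multline*}
(-1)^{i+j}\Bigl[\binom{m-1}{i}\binom{m-1}{j} + \binom{m-1}{i-1}\binom{m-1}{j} \\
 + \binom{m-1}{i}\binom{m-1}{j-1} + \binom{m-1}{i-1}\binom{m-1}{j-1}\Bigr],
\end{multline*}
which factors as $(-1)^{i+j}\bigl[\binom{m-1}{i-1}+\binom{m-1}{i}\bigr]\bigl[\binom{m-1}{j-1}+\binom{m-1}{j}\bigr]$ and collapses to $(-1)^{i+j}\binom{m}{i}\binom{m}{j}$ by Pascal's identity, as required.

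For the base case $m=2$, I apply the recursion to $\mathcal{\tau}^{(1)}_{\boldsymbol{A}} = \mathcal{T}_{\boldsymbol{A}}$ and use \eqref{eq:TLautog} to split $\mathcal{T}_{\boldsymbol{A}}(X_\alpha, Y_\beta) = M(\alpha, \beta) + D(\alpha, \beta)$, where the algebraic part
\begin{multline*}
M(\alpha,\beta) = (\boldsymbol{A}-\mu\boldsymbol{I})(\boldsymbol{A}-\nu\boldsymbol{I})[X_\alpha,Y_\beta] - (\boldsymbol{A}-\mu\boldsymbol{I})[X_\alpha,Y_{\beta-1}] \\
- (\boldsymbol{A}-\nu\boldsymbol{I})[X_{\alpha-1},Y_\beta] + [X_{\alpha-1},Y_{\beta-1}]
\end{multline*}
already matches the shape of \eqref{eq:TmL2autog} at $m=1$, while $D(\alpha,\beta)$ gathers the derivative terms involving $X_\alpha(\nu)$ and $Y_\beta(\mu)$. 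The $\mathfrak{D}M$ piece produces the $m=2$ formula by the same Pascal argument as above, so the crux is $\mathfrak{D}D = 0$. Split $D = D_X + D_Y$ according to whether a summand is proportional to some $X_\gamma$ or to some $Y_\gamma$; the essential observation is that for any scalar function $f$ independent of $\beta$,
\[
\bigl((\boldsymbol{A}-\nu\boldsymbol{I})-\sigma_\beta\bigr)(f\,Y_\gamma) = f(\boldsymbol{A}-\nu\boldsymbol{I})Y_\gamma - f\,Y_{\gamma-1} = 0,
\]
since $(\boldsymbol{A}-\nu\boldsymbol{I})Y_\gamma = Y_{\gamma-1}$. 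Hence $\bigl((\boldsymbol{A}-\nu\boldsymbol{I})-\sigma_\beta\bigr)D_Y = 0$; the symmetric identity gives $\bigl((\boldsymbol{A}-\mu\boldsymbol{I})-\sigma_\alpha\bigr)D_X = 0$; and the commutativity of the factors of $\mathfrak{D}$ yields $\mathfrak{D}D = 0$.

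The main obstacle is precisely this annihilation. The derivative terms in $\mathcal{T}_{\boldsymbol{A}}(X_\alpha,Y_\beta)$ prevent formula \eqref{eq:TmL2autog} from holding at $m=1$, but are exactly killed by the Jordan-shifted operator $\mathfrak{D}$. This is what singles out the threshold $m \geq 2$ in the statement, and once the base case is in place the induction reduces to the binomial manipulation above.
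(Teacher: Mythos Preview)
Your proof is correct and follows the same inductive scheme as the paper's (induction on $m\geq 2$, with the base case handled via the identity \eqref{eq:HLautog}, which is precisely your recursion $\mathfrak{D}$ applied to $\mathcal{T}_{\boldsymbol{A}}$). The organisation, however, is genuinely different and worth noting. You first isolate the clean four-term recursion
\[
\tau^{(m)}_{\boldsymbol{A}}(X_\alpha,Y_\beta)=\bigl((\boldsymbol{A}-\mu\boldsymbol{I})-\sigma_\alpha\bigr)\bigl((\boldsymbol{A}-\nu\boldsymbol{I})-\sigma_\beta\bigr)\,\tau^{(m-1)}_{\boldsymbol{A}}(X_\alpha,Y_\beta),
\]
valid for all $m\geq 2$ by tensoriality of $\tau^{(m-1)}$; once this is in hand, the inductive step is a one-line Pascal computation and the derivative terms need to be killed only once, at the base $m=2$, by the observation $\bigl((\boldsymbol{A}-\nu\boldsymbol{I})-\sigma_\beta\bigr)(fY_\gamma)=0$. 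The paper's Appendix instead substitutes the inductive hypothesis into Definition~\ref{df:mtorsion} directly, which reintroduces Nijenhuis-type derivative terms at \emph{every} step (the last block in \eqref{eq:Tmp3}); these are then disposed of via the alternating binomial identity $\sum_{j}(-1)^j\binom{m}{j}=0$, followed by a lengthy regrouping of commutator terms. Your route is shorter and more conceptual; the paper's has the minor advantage of not needing to name the shift operators or argue their commutativity.
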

\noi This proposition will be useful in the proof of Lemmas \ref{lm:comXl} and \ref{lm:comXmn}, stated below.

\section{Generalized Nijenhuis torsions and Haantjes  brackets for nilcyclic operators}
In order to clarify the geometric relevance of both the generalized Nijenhuis torsions and the   Haantjes bracket of level $m$, we shall focus first on the case of nilcyclic operators (namely operators which are both \textit{nilpotent} and \textit{cyclic}). According to the classical Jordan-Chevalley decomposition theorem, given a vector space $V$, any linear endomorphism $\bs{L}: V\to V$ with real eigenvalues
can be written in a unique way as the sum  $\bs{L= D+ N}$, where $\bs{D}$ is a diagonalizable operator and $\bs{N}$ is a nilpotent operator, commuting with $\bs{D}$. 

\vspace{2mm}


Hereafter, the symbol $\langle \hspace{1mm}  \rangle $ will denote a $C^{\infty}(M)$-linear span of vector fields. 

\begin{definition}[Natural flag]
\par
Let $(U, x^1,\ldots, x^n)$  be a local coordinate chart and $\big(\frac{\partial}{\partial x^1}, \ldots,\frac{\partial}{\partial x^n}\big)$ the natural reference frame associated with it.  The flag  of integrable distributions
$$
\mathcal{C}_0=\{0\}\subset
\mathcal{C}_1=<e_1 >\subset
\mathcal{C}_2 =<e_1 , e_2>\subset \ldots\subset
\mathcal{C}_{n-1}=<e_1,\ldots,  e_{n-1}>
\subset \mathcal{C}_{n}=\mathfrak{X}(U),
$$
where $e_i:=\frac{\partial}{\partial x^i}$ $(i=1,\ldots,n-1)$,
will be called the natural flag associated with the local chart $(x^1,\ldots, x^n)$.
\end{definition}
\par
\subsection{Triangular form of nilcyclic operators} Let $M$ be an $n$-dimensional  differentiable manifold, and $\bs{A}:\mathfrak{X}(M)\to \mathfrak{X}(M)$ be a
 \textit{nilcyclic} \cite{BW} operator, that is   a nilpotent (1,1) tensor of maximal index $n$:
$$
\bs{A}^n=\bs{0} \qquad \textrm{and} \qquad \bs{A}^{n-1} \neq \bs{0} \ .
$$
This condition implies that there exist local reference frames, possibly non integrable ones, in which $\bs{A}$ is represented by a \textit{single}, upper strictly  triangular Jordan block. Under these assumptions,
the characteristic  null  flag of $\bs{A}$
$$
\{\bs{0}\}\subset \ker \bs{A}\subset \ker \bs{A}^2 \subset\ldots\subset \ker \bs{A}^n=\mathfrak{X}(M)
$$
is a complete flag \cite{Rob}, that is, $rank(\ker \bs{A}^j)=j\ , j=1,\ldots,n$. Also, the following inclusions hold:
\begin{equation} \label{eq:FlagSinv}
\bs{A}^k( \ker \bs{A}^{j})\subseteq \ker \bs{A}^{j-k}, \qquad\qquad j \geq k=1,\ldots,n  \ .
\end{equation}
Let us assume that there exists a local coordinate chart $(x^1,\ldots,x^n) $ on $M$ where $\bs{A}$ takes the upper strictly triangular form
\begin{equation} \label{Anoper}
\boldsymbol{A}=\sum _{i,j=1}^n a^{i}_{j}(\boldsymbol{x}) \frac{\partial}{\partial x^i}\otimes \rd x^j \ , \quad a^{i}_{j}=0 \hspace{2mm} \text{if} \hspace{2mm} i\geq j \ .
 \end{equation}

\noi Here $a^{i}_{j}(\bs{x})=a^{i}_{j}(x^1,\ldots,x^n)$ are smooth arbitrary functions depending on the local coordinates on $M$.
In this case, the integrable distributions of the natural flag coincide with the kernels of the powers of the operator $\bs{A}$. Precisely,
\beq \label{inclusion}
\mathcal{C}_{j} =\ker \boldsymbol{A}_{|U}^{j} \ , \qquad \qquad j=1,\ldots ,n \ .
\eeq

The following result establishes a \textit{necessary} condition for a nilcyclic operator to be represented in the upper triangular form, in a suitable coordinate chart.
\begin{proposition} \label{theo:triangular}
Let $M$ be an $n$-dimensional differentiable manifold, and $\bs{A}: \mathfrak{X}(M) \to \mathfrak{X}(M)$ be a nilcyclic (1,1) tensor on $M$. If there exists a local chart where the operator $\bs{A}$ takes the triangular form \eqref{Anoper}, then  the generalized Nijenhuis torsion
of level $(k-1)$  vanishes for all $X,Y\in\ker \bs{A}^k$:
\begin{equation}\label{eq:TauGnull}
\mathcal{\tau}^{(k-1)}_{\boldsymbol{A}} (\ker \bs{A}^k,  \ker \bs{A}^k)=\bs{0} \qquad\qquad k\in \mathbb{N}\backslash \{0,1\} \ .
\end{equation}

\end{proposition}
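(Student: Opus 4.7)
The plan is to prove the statement by combining the Kosmann--Schwarzbach closed form \eqref{eq:K17} for the generalized torsion $\mathcal{\tau}^{(m)}_{\boldsymbol{A}}$ with the structure of the natural flag $\mathcal{C}_j = \ker\bs{A}^j$ in the triangular chart \eqref{Anoper}. Throughout set $m := k-1$ and fix $X, Y \in \ker\bs{A}^k = \mathcal{C}_k$.

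First I would use a flag argument to confine $\mathcal{\tau}^{(m)}_{\bs{A}}(X,Y)$ to $\mathcal{C}_1$. By \eqref{eq:FlagSinv} one has $\bs{A}^{m-p}X \in \mathcal{C}_{p+1}$ and $\bs{A}^{m-q}Y \in \mathcal{C}_{q+1}$. The natural flag is involutive and satisfies $[\mathcal{C}_a, \mathcal{C}_b] \subseteq \mathcal{C}_{\max(a,b)}$ (coordinate vector fields commute), so the bracket in \eqref{eq:K17} sits in $\mathcal{C}_{\max(p,q)+1}$, and applying $\bs{A}^{p+q}$ yields a section of $\mathcal{C}_{1-\min(p,q)}$ (interpreted as $\{0\}$ when the index is nonpositive). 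Consequently every term with $\min(p,q) \geq 1$ vanishes identically, and $\mathcal{\tau}^{(m)}_{\bs{A}}(X,Y) \in \mathcal{C}_1 = \ker\bs{A} = \langle\partial_1\rangle$, the surviving contributions being exactly those with $p = 0$ or $q = 0$.

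Second, I would show that this $\langle\partial_1\rangle$-valued remainder actually vanishes by a direct coordinate expansion. Writing $X = \sum_{s=1}^{m+1} X^s\partial_s$, $Y = \sum_{s=1}^{m+1} Y^s\partial_s$ and introducing
\[
\mu := (\bs{A}^m)^1_{\,m+1}, \qquad \gamma_q := (\bs{A}^q)^1_{\,q+1}, \qquad \beta_q := (\bs{A}^{m-q})^{q+1}_{\,m+1},
\]
the strict upper triangular condition $(\bs{A}^r)^i_j = 0$ unless $j - i \geq r$ forces the composition $\bs{A}^m = \bs{A}^q\,\bs{A}^{m-q}$ to collapse, at the entry $(1, m+1)$, to the single telescoping identity
\[
\mu = \gamma_q\,\beta_q, \qquad q = 0,1,\ldots,m.
\]
In particular $\bs{A}^m X = \mu X^{m+1}\partial_1$ and $\bs{A}^m Y = \mu Y^{m+1}\partial_1$. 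Computing each $\bs{A}^q[\bs{A}^m X, \bs{A}^{m-q}Y]$ by projecting the bracket onto its $\partial_{q+1}$-component (the only one $\bs{A}^q$ does not annihilate) yields, for $q \geq 1$, an expression of the form $\mu^2 X^{m+1}\partial_1(Y^{m+1})\partial_1 + \mu\gamma_q X^{m+1}Y^{m+1}\partial_1(\beta_q)\partial_1$; the $p$-family is obtained by the interchange $X \leftrightarrow Y$ with an overall sign coming from bracket antisymmetry. The binomial identities $\sum_{q=0}^m(-1)^q\binom{m}{q}=0$ and $\sum_{q=1}^m(-1)^q\binom{m}{q}=-1$, together with the $q=0$ bracket $[\mu X^{m+1}\partial_1, \mu Y^{m+1}\partial_1]$, then force pairwise cancellation of the $\mu^2$-pieces, while the $\mu\gamma_q\partial_1(\beta_q)$-sums in the $p$- and $q$-families appear with opposite signs and so cancel as well, giving $\mathcal{\tau}^{(m)}_{\bs{A}}(X,Y) = 0$.

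The hard part is the bookkeeping in this second step: identifying the unique surviving entry of each matrix power of $\bs{A}$ along the chain $1 \mapsto q+1 \mapsto m+1$, spotting the telescoping identity $\mu = \gamma_q\beta_q$ that renders the diagonal $\mu^2$-contribution $q$-independent, and aligning the signs so that the binomial sums collapse to zero. An induction on $m$ based on the recursive definition \eqref{GNTn} is tempting, but it produces mixed-kernel terms of the form $\mathcal{\tau}^{(m-1)}_{\bs{A}}(\ker\bs{A}^{m+1}, \ker\bs{A}^m)$ that the inductive hypothesis does not control, which is why the explicit route through \eqref{eq:K17} is the cleaner way.
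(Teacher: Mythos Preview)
Your argument is correct, but it takes a more laborious route than the paper's. The paper also invokes formula \eqref{eq:K17} together with the strong invariance $\bs{A}^p(\mathcal{C}_j)\subseteq\mathcal{C}_{j-p}$, but it evaluates $\mathcal{\tau}^{(k-1)}_{\bs{A}}$ only on the coordinate basis vectors $e_i=\partial/\partial x^i$, which is legitimate because $\mathcal{\tau}^{(k-1)}_{\bs{A}}$ is a $(1,2)$ tensor (hence $C^\infty(M)$-bilinear). With that choice the flag bound is tighter: $e_i\in\mathcal{C}_i$ rather than merely $\mathcal{C}_{m+1}$, so in \eqref{eq:K17} one has $\bs{A}^{k-p}e_i\in\mathcal{C}_{i-k+p}$, and the resulting inclusion lands every nonzero term directly in $\mathcal{C}_0=\{0\}$. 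No residual in $\mathcal{C}_1$ survives, and your Step~2 becomes unnecessary. The paper also organizes this as an induction on $k$, handling $e_i,e_j$ with $i,j\le k$ via the recursion \eqref{GNTn} and only the boundary case $(e_i,e_{k+1})$ via \eqref{eq:K17}; your worry about ``mixed-kernel terms'' in the recursion is justified for general $X,Y$, but disappears once one restricts to basis vectors, since $\bs{A}e_i$ is a $C^\infty$-combination of $e_1,\ldots,e_{i-1}$ and tensoriality pushes the inductive hypothesis through.

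What your approach buys is a self-contained, induction-free argument that works for arbitrary $X,Y\in\ker\bs{A}^k$ and makes the binomial cancellation mechanism explicit (via the telescoping identity $\mu=\gamma_q\beta_q$). What the paper's approach buys is brevity: tensoriality plus the finer flag position of $e_i$ let the geometric inclusion do all the work.
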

\begin{proof}
\par
First, we observe that   the (strong) invariance conditions
\begin{equation} \label{eq:FlagInv}
\bs{A}^p(\mathcal{C}_j) \subseteq \mathcal{C}_{j-p} \qquad\qquad p=0,\ldots,n \ ,
\end{equation}
hold as a consequence of  relations \eqref{eq:FlagSinv} and \eqref{inclusion}. In the latter conditions, it is understood that $\mathcal{C}_{j-p} \equiv\mathcal{C}_0\,$ for $j\leq p$.

 Then, we can proceed by induction over $k=2,\ldots,n-1$. To this aim, notice that for $k=2$, we have
\[
\mathcal{\tau}_{\bs{A}}(e_1,e_2)=\bs{A}^2\cancel{ [e_1,e_2]}+[\cancel{\bs{A} e_1},\bs{A}e_2]-
\bs{A}([\cancel{\bs{A} e_1},e_2]+[e_1,\bs{A}e_2]) \\
=-\cancel{\bs{A} [e_1,\bs{A} e_2]} =\bs{0} \ .
\]
The first addend vanishes because both $e_1$,  $e_2$ are constant fields, the second and third one vanish since $e_1\in \ker\bs{A}$, whereas the last term is zero due to both the invariance condition \eqref{eq:FlagInv} and the obvious involutivity of
 $\ker\bs{A}$ (being  $\textit{rank}~(\ker\bs{A})=1$). Now we assume that
  \begin{equation}
\mathcal{\tau}^{(k-1)}_{\boldsymbol{A}} (e_i, e_j)=\bs{0} \ , \qquad\qquad i,j=1,\ldots, k \ .
\end{equation}
This hypothesis, jointly with Definition \ref{GNTn} and the $\bs{A}$-invariance of
$\ker \bs{A}^k$ implies
$$
\mathcal{\tau}^{(k)}_{\boldsymbol{A}} (e_i, e_j)=\bs{0} \ , \qquad\qquad i,j=1,\ldots,k \ .
$$
We are left with the terms
$$
\mathcal{\tau}^{(k)}_{\boldsymbol{A}} (e_i, e_{k+1}) \ , \qquad\qquad i=1,\ldots, k,
$$
which can be evaluated by means of Eq. \eqref{eq:K17}. We obtain
$$
\mathcal{\tau}^{(k)}_{\boldsymbol{A}}(e_i,e_{k+1})=
\sum_{p,q=0}^{k} (-1)^{-(p+q)} \binom{k}{p} \binom{k}{q} \bs{A}^{p+q}\Big [\bs{A}^{k-p} e_i, \bs{A}^{k-q} e_{k+1}\Big ] \qquad i=1,\ldots k \ .
$$
As $e_i\in \ker\bs{A}^k$, the addends corresponding to  $p=0$ vanish. Moreover, for $p>0$, by virtue of equation \eqref{eq:FlagInv},  the following inclusions hold:
\begin{multline}
\bs{A}^{p+q}\Big [\bs{A}^{k-p} e_i, \bs{A}^{k-q} e_{k+1}\Big ]
\subseteq
\bs{A}^{p+q} \Big [\mathcal{C}_{i-(k-p)},\mathcal {C}_{k+1-(k-q)}\Big]
\subseteq
\bs{A}^{p+q}(\mathcal {C}_{max(i-k+p, 1+q)} )\\
\subseteq \mathcal{C}_{-(p+q)+max(i-k+p,1+q)}=\mathcal {C}_0 \ .
\end{multline}
\end{proof}

\noi We can now infer a direct, but important consequence of Proposition \eqref{theo:triangular}.
\begin{corollary} \label{cor:ntriangular}
Let $M$ be an $n$-dimensional differentiable manifold, $n\geq 2$  and $\bs{A}: \mathfrak{X}(M) \to \mathfrak{X}(M)$ be a nilcyclic (1,1) tensor on $M$.
Then, the condition 
\beq
\mathcal{\tau}^{(n-1)}_{\boldsymbol{A}}(X,Y)=\bs{0}, \qquad  X,Y\in \mathfrak{X}(M) \ ,
\eeq
is necessary for the existence of a local chart where $\bs{A}$ takes the triangular form \eqref{Anoper}.
\end{corollary}
\begin{proof}
It is sufficient to apply Proposition \eqref{theo:triangular} to the torsion of level $k=n$ and to observe that $\ker \bs{A}^n= \mathfrak{X}(M)$, as $\bs{A}$ is nilcyclic.
\end{proof}

\vspace{2mm}

\noi Consider the slightly more general case of a tensor of the form 
\beq \label{eq:Ln}
\bs{L} = \lambda \bs{I}+ \bs{A}, \qquad\qquad \lambda \in C^{\infty}(M) \ ,
\eeq
where $\bs{A}$ is a nilcyclic operator.  We have the following result.
\begin{corollary} \label{theo:4}
Let $M$ be an $n$-dimensional differentiable manifold, $n \geq 3$ and $\bs{L}: \mathfrak{X}(M) \to \mathfrak{X}(M)$ be a (1,1) tensor of the form \eqref{eq:Ln}.
If there exists a local chart where $\bs{L}$ takes the triangular form
\begin{equation} \label{LnTriang}
\boldsymbol{L}= \sum _{i,j=1}^n \left(\lambda (\boldsymbol{x})\delta^{i}_{j}+ a^{i}_{j}(\boldsymbol{x}) \right) \frac{\partial}{\partial x^i}\otimes \rd x^j \ , \quad a^{i}_{j}=0 \hspace{2mm} \text{if} \hspace{2mm} i\geq j \ ,
 \end{equation}
then
\begin{eqnarray}
\mathcal{\tau}^{(k-1)}_{\boldsymbol{L}}(X \, ,Y)&=&\bs{0}  \ , \qquad \forall \hspace{1mm} X,Y \in \ker(\bs{L}-\lambda\bs{I})^k , \qquad 3\leq k \leq n-1 
\ ,
\end{eqnarray}
and
\begin{eqnarray}
\mathcal{\tau}^{(n-1)}_{\boldsymbol{L}}(X,Y)&=&\bs{0} \ , \qquad \forall \hspace{1mm} X,Y \in \mathfrak{X}(M)  \ . 
\end{eqnarray}
\end{corollary}
\begin{proof}
The previous relations hold as  a consequence of Proposition \ref{theo:triangular}, Corollary \ref{cor:ntriangular} and Corollary \ref{corollary:3}.
\end{proof}

\subsection{An open problem: A Jordan-Chevalley decomposition}
The relevance of the generalized Nijenhuis torsions in the study of nilcyclic operators suggests, in a natural way, an interesting problem: namely, to ascertain whether there exists a Jordan-Chevalley-type decomposition for generic operators. Precisely, we propose the following, general

\vspace{2mm}

\textbf{Problem}.
Let $M$ be an $n$-dimensional differentiable manifold. Determine under which conditions there exist coordinate charts on $M$ such that  an operator
$\bs{L}: \mathfrak{X}(M) \to \mathfrak{X}(M)$ can be decomposed into the sum of two operators $\bs{L= D+N}$, where $\bs{D}$ is a diagonal operator and $\bs{N}$ is an upper  strictly   triangular  operator, commuting with $\bs{D}$, of the form \eqref{Anoper}.
\subsection{Conjecture for higher Haantjes brackets}
Inspired by the previous discussion, we conjecture the following result (which has been tested in many examples).
\par
\begin{conjecture}
Let $M$ be an $n$-dimensional differentiable manifold, and $\bs{A}, \bs{B}: \mathfrak{X}(M) \to \mathfrak{X}(M)$ be two nilpotent commuting (1,1) tensors on $M$. The vanishing of their generalized  Haantjes bracket  of level $(n-1)$
\beq
\mathcal{H}^{(n-1)}_{\boldsymbol{A}, \bs{B}}(X,Y)=\bs{0}
\eeq
is a necessary condition for the existence of a local chart where the tensors $\bs{A}$, $\bs{B}$ take simultaneously the triangular form \eqref{Anoper}.
\end{conjecture}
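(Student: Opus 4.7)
The strategy is to imitate the proof of Proposition~\ref{theo:triangular} and Corollary~\ref{theo:ntriangular} in the binary setting, exploiting simultaneous triangularizability to track how compositions of $\bs{A}$ and $\bs{B}$ lower a natural flag. Fix the chart $(x^1,\ldots,x^n)$ provided by the hypothesis in which both operators take the strict upper triangular form \eqref{Anoper}. Set $e_i=\frac{\partial}{\partial x^i}$ and $\mathcal{C}_j=\langle e_1,\ldots,e_j\rangle$, with the convention $\mathcal{C}_r=\{0\}$ for $r\leq 0$. Because the diagonal entries vanish, $\bs{A}e_i,\bs{B}e_i\in\mathcal{C}_{i-1}$, whence any word $W$ of total length $p$ in $\{\bs{A},\bs{B}\}$ satisfies $W(\mathcal{C}_j)\subseteq\mathcal{C}_{j-p}$. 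Moreover, each $\mathcal{C}_j$ is involutive, being spanned by coordinate vector fields, so $[\mathcal{C}_j,\mathcal{C}_l]\subseteq\mathcal{C}_{\max(j,l)}$. These are the precise binary analogs of the ingredients \eqref{eq:FlagSinv}--\eqref{eq:FlagInv} used in the unary proof.

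The core auxiliary statement is a binary version of the Kosmann-Schwarzbach expansion \eqref{eq:K17}: under $[\bs{A},\bs{B}]=0$, by induction on $m$ starting from the Fr\"olicher-Nijenhuis bracket, one expects an identity of the form
\[
\mathcal{H}^{(m)}_{\bs{A},\bs{B}}(X,Y)=\!\!\sum_{\substack{\alpha+\gamma+\epsilon=m\\ \beta+\delta+\zeta=m}}\!\! c^{(m)}_{\alpha\beta\gamma\delta\epsilon\zeta}\,\bs{A}^{\alpha}\bs{B}^{\beta}\bigl[\bs{A}^{\gamma}\bs{B}^{\delta}X,\bs{A}^{\epsilon}\bs{B}^{\zeta}Y\bigr],
\]
with coefficients given by signed multinomials. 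The crucial feature for the application is that every summand has total degree $m$ in $\bs{A}$ and total degree $m$ in $\bs{B}$; specialising $\bs{A}=\bs{B}$ recovers \eqref{eq:K17} up to the factor $2^m$ of Definition~\ref{df:mtorsion}, and the cases $m=1,2$ are verified directly from \eqref{eq:binNtors} and Definition~\ref{HBT}.

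With these ingredients, I would prove by induction on $k=2,\ldots,n-1$ that $\mathcal{H}^{(k-1)}_{\bs{A},\bs{B}}(e_i,e_j)=0$ for all $i,j\leq k$, mirroring the proof of Proposition~\ref{theo:triangular}. The base case $k=2$ reduces to $\llbracket\bs{A},\bs{B}\rrbracket(e_1,e_2)=0$: terms containing $\bs{A}e_1$ or $\bs{B}e_1$ vanish, $[e_1,e_2]=0$ kills a further pair, and the surviving summands lie in $\bs{A}(\mathcal{C}_1)\cup\bs{B}(\mathcal{C}_1)=\{0\}$. In the inductive step, the $C^\infty(M)$-linearity of $\mathcal{H}^{(k-1)}_{\bs{A},\bs{B}}$ in both slots and the inclusions $\bs{A}(\mathcal{C}_k),\bs{B}(\mathcal{C}_k)\subseteq\mathcal{C}_k$, combined with the recursion \eqref{GHTn}, give $\mathcal{H}^{(k)}_{\bs{A},\bs{B}}(e_i,e_j)=0$ for $i,j\leq k$ as an immediate consequence of the hypothesis. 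The boundary case $\mathcal{H}^{(k)}_{\bs{A},\bs{B}}(e_i,e_{k+1})$ with $i\leq k$ is handled by the expansion formula: each summand $\bs{A}^\alpha\bs{B}^\beta[\bs{A}^\gamma\bs{B}^\delta e_i,\bs{A}^\epsilon\bs{B}^\zeta e_{k+1}]$ lies in $\mathcal{C}_{\max(i-2k+\epsilon+\zeta,\,1-k+\gamma+\delta)}$ by the lowering property, and the non-annihilation conditions $\gamma+\delta\leq i-1$, $\epsilon+\zeta\leq k$, together with $i\leq k$, force both indices to be $\leq 0$, so the summand lies in $\mathcal{C}_0$. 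Skew-symmetry in $X,Y$ extends vanishing to $(e_{k+1},e_j)$, $j\leq k$, closing the induction. Setting $k=n-1$ yields $\mathcal{H}^{(n-1)}_{\bs{A},\bs{B}}=0$ on all of $TM\times TM$ by $C^\infty(M)$-linearity.

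The principal obstacle is the derivation of the binary expansion formula: the recursion \eqref{GHTn} contains six terms with $\bs{A}$ and $\bs{B}$ entering asymmetrically, and tracking signs and multinomial coefficients through the induction on $m$ is substantially more delicate than in the unary case. Commutativity $[\bs{A},\bs{B}]=0$ is essential to collapse the many reorderings into the standard $\bs{A}^\alpha\bs{B}^\beta$ form. Notably, only the weaker degree statement, namely that every summand has total degree exactly $m$ in $\bs{A}$ and in $\bs{B}$, is actually needed to close the argument, and this can be extracted directly from \eqref{GHTn} by a coarser induction, providing a fallback route if the full explicit formula proves too cumbersome.
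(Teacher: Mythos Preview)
The paper does not prove this statement: it is presented explicitly as a conjecture in Section~5.3, supported only by numerical tests (``tested in many examples''). There is therefore no paper proof to compare against; your outline, if completed, would constitute a proof of an open conjecture of the paper.

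Your strategy is the natural binary extension of the proof of Proposition~\ref{theo:triangular}, and the flag-index computation for the boundary term $\mathcal{H}^{(k)}_{\bs{A},\bs{B}}(e_i,e_{k+1})$ is correct: with $|W_0|+|W_1|+|W_2|=2k$ and the non-annihilation constraints $|W_1|\le i-1$, $|W_2|\le k$, both indices $i-2k+|W_2|$ and $1-k+|W_1|$ are bounded above by $i-k\le 0$. The two structural ingredients you isolate---that every summand of $\mathcal{H}^{(m)}_{\bs{A},\bs{B}}(X,Y)$ is of the form $W_0[W_1X,W_2Y]$ with words in $\{\bs{A},\bs{B}\}$ of total length $2m$, and that any word of length $p$ sends $\mathcal{C}_j$ into $\mathcal{C}_{j-p}$---follow immediately from the recursion \eqref{GHTn} and strict upper-triangularity, with no need for the explicit multinomial coefficients.

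Two minor remarks. First, as you note yourself, only the coarse degree count is required, and that count follows from \eqref{GHTn} without using $[\bs{A},\bs{B}]=0$; your argument therefore establishes the stronger statement in which the commutativity hypothesis is dropped. Second, the induction should run up to $k=n$ (or, equivalently, the inductive step applied with hypothesis at $k=n-1$ yields the conclusion $\mathcal{H}^{(n-1)}_{\bs{A},\bs{B}}\equiv 0$ on $\mathcal{C}_n=TM$); your phrase ``setting $k=n-1$'' is correct only if read as naming the final inductive \emph{step} rather than the final value of the statement index.
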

\par

\section{Frobenius Integrability and a generalized Haantjes theorem}

\subsection{Integrability of eigen-distributions: Necessary and sufficient conditions}

The relevance of the new families of generalized torsions introduced in this paper is further illustrated by their strict relationship with the properties of integrability of the generalized eigen-distributions admitted by a $(1,1)$-tensor.

In the first part of the discussion, the eigenvalues and eigenvectors of operators are supposed to be known. However, this hypothesis will be removed in the statement of our Main Theorem: indeed, no knowledge a priori of the spectrum and the eigen-distributions of the operators involved will be assumed.
\begin{remark}
All the eigen-distributions considered are supposed to be \textit{regular}, that is they have constant rank on $M$. For involutive distributions, this condition is equivalent to their Frobenius integrability.
\end{remark}

\begin{definition}\label{def:mI}
Let us consider a set of distributions $\{\mathcal{D}_i, \mathcal{D}_j, \ldots, \mathcal{D}_k \}$.
We shall say that such distributions  are \textit{mutually integrable} if

(i) each of them is integrable;

(ii) any  sum $\mathcal{D}_i  + \mathcal{D}_j +\cdots +\mathcal{D}_k$ is also integrable.
\end{definition}
First we state the following
\begin{lemma} \label{lemma:5}
Let $\bs{A}:\mathfrak{X}(M)\to \mathfrak{X}(M)$ be a non-invertible  operator. For any $X,Y \in \ker\bs{A}$, we have
\beq \label{eq:taumtokerA}
\mathcal{\tau}^{(m)}_{\bs{A}}(X,Y)= \bs{A}^{2m} [X,Y], \qquad m\in \mathbb{N} \backslash \{0\} \ .
\eeq
\end{lemma}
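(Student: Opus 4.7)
The quickest route is to invoke the explicit polynomial representation \eqref{eq:K17}, which writes
\[
\mathcal{\tau}^{(m)}_{\bs{A}}(X,Y)=\sum_{p=0}^{m}\sum_{q=0}^{m}(-1)^{2m-p-q}\binom{m}{p}\binom{m}{q}\,\bs{A}^{p+q}\bigl[\bs{A}^{m-p}X,\bs{A}^{m-q}Y\bigr].
\]
If $X,Y\in\ker\bs{A}$, then $\bs{A}^{m-p}X=0$ for every $p<m$, and analogously $\bs{A}^{m-q}Y=0$ for every $q<m$. Hence in the double sum only the single term with $p=q=m$ is non-zero; its coefficient equals $(-1)^{0}\binom{m}{m}^{2}=1$, and it contributes $\bs{A}^{2m}[X,Y]$. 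This is precisely the claimed identity.

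For a self-contained alternative (and as a sanity check of the formula above) I would give a short induction on $m$ based on the recursion in Definition \ref{df:mtorsion}. The base case $m=1$ follows by direct inspection of \eqref{eq:Ntorsion}: with $\bs{A}X=\bs{A}Y=0$ the three brackets $[\bs{A}X,\bs{A}Y]$, $[X,\bs{A}Y]$ and $[\bs{A}X,Y]$ vanish, leaving $\mathcal{\tau}_{\bs{A}}(X,Y)=\bs{A}^{2}[X,Y]$. For the inductive step, the defining recursion
\[
\mathcal{\tau}^{(m)}_{\bs{A}}(X,Y)=\bs{A}^{2}\mathcal{\tau}^{(m-1)}_{\bs{A}}(X,Y)+\mathcal{\tau}^{(m-1)}_{\bs{A}}(\bs{A}X,\bs{A}Y)-\bs{A}\bigl(\mathcal{\tau}^{(m-1)}_{\bs{A}}(X,\bs{A}Y)+\mathcal{\tau}^{(m-1)}_{\bs{A}}(\bs{A}X,Y)\bigr)
\]
collapses: the last three summands are zero by $C^{\infty}(M)$-bilinearity of $\mathcal{\tau}^{(m-1)}_{\bs{A}}$, and the inductive hypothesis turns the surviving term into $\bs{A}^{2}\cdot\bs{A}^{2(m-1)}[X,Y]=\bs{A}^{2m}[X,Y]$.

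\textbf{Where the difficulty sits.} There is essentially no obstacle: the argument rests only on the tensorial (i.e.\ $C^{\infty}(M)$-linear) character of $\mathcal{\tau}^{(m)}_{\bs{A}}$ in each vector argument, which ensures that substituting a vanishing vector kills the corresponding term. The non-invertibility assumption on $\bs{A}$ is used only so that $\ker\bs{A}$ is a non-trivial object to test the identity on; the conclusion itself is a purely algebraic consequence of $\bs{A}X=\bs{A}Y=0$ together with \eqref{eq:K17}. The only point that merits a line of care is that the exponents $m-p,m-q$ remain non-negative throughout the sum, so that ``$\bs{A}^{m-p}X=0$ whenever $m-p\ge 1$'' really eliminates every term except $p=q=m$.
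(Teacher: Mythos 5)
Your primary argument is exactly the paper's proof: the authors likewise invoke the explicit formula \eqref{eq:K17} and observe that for $X,Y\in\ker\bs{A}$ only the term with $p=q=m$ survives, yielding $\bs{A}^{2m}[X,Y]$. Your supplementary induction via the recursion in Definition \ref{df:mtorsion} is also correct, but it is not needed and does not appear in the paper.
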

\begin{proof}
Equation \eqref{eq:taumtokerA} comes from Eq. \eqref{eq:K17} taking into account  that
 the terms with  $p<m$ and $q<m$ vanish.
\end{proof}
Let us recall that the \textit{Riesz index of an operator} $\bs{A}$ is the Riesz index $\rho$ of its zero eigenvalue (supposed to be locally constant over $M$), namely the minimum integer $\rho$ that makes stationary the sequence
\begin{equation}\label{eq:kerFlag}
\{0\}\subset\ker\bs{A}\subset\ker\bs{A}^2\subset \ldots \subset\ker \boldsymbol{A}^\rho=\ker\boldsymbol{A}^{\rho+j}\subseteq \mathfrak{X}(M), \quad j\in \mathbb{N} \ .
\end{equation}
\begin{proposition} \label{pr:kerBrho}
Let $\bs{A}:\mathfrak{X}(M)\to \mathfrak{X}(M)$ be an operator   and $\rho$ its Riesz index.
The following conditions are equivalent:
\begin{itemize}
\item[1)]
the distribution $\ker \bs{A}^\rho$ is involutive;
\item[2)]
\begin{equation} \label{eq:tauInv}
\exists \,  m\in \mathbb{N}\backslash \{0\} \qquad\textit{such that}\quad
 \mathcal{\tau}_{\bs{A}^\rho}^{(m)}(\ker\bs{A}^\rho,\ker\bs{A}^\rho) =\bs{0} \ ;
\end{equation}
\item[3)]
\begin{equation} \label{eq:tauNull}
\forall \, m\in \mathbb{N}\backslash \{0\}, \qquad
 \mathcal{\tau}^{(m)}_{\bs{A}^\rho}(\ker\bs{A}^\rho,\ker\bs{A}^\rho) =\bs{0} \ .
\end{equation}
\end{itemize}
\end{proposition}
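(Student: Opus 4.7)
The plan is to reduce everything to a single identity provided by Lemma \ref{lemma:5} applied to the operator $\bs{B}:=\bs{A}^\rho$. Since $\rho$ is the Riesz index of $\bs{A}$, the flag \eqref{eq:kerFlag} stabilizes from level $\rho$ onward, so in particular $\ker\bs{B}=\ker\bs{A}^\rho=\ker\bs{A}^{2\rho}=\ker\bs{B}^2$; more generally $\ker\bs{A}^{k\rho}=\ker\bs{A}^\rho$ for every integer $k\geq 1$. Since $\bs{B}$ is non-invertible, Lemma \ref{lemma:5} with $m=1$ yields, for every pair $X,Y\in\ker\bs{A}^\rho$,
\[
\mathcal{\tau}_{\bs{A}^\rho}(X,Y)=\bs{B}^{2}[X,Y]=\bs{A}^{2\rho}[X,Y].
\]
This single formula is the engine of all three implications.

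I would then close the circle $1)\Rightarrow 3)\Rightarrow 2)\Rightarrow 1)$. For $1)\Rightarrow 3)$: involutivity of $\ker\bs{A}^\rho$ gives $[X,Y]\in\ker\bs{A}^\rho$, hence $\bs{A}^\rho[X,Y]=\bs{0}$ and a fortiori $\bs{A}^{2\rho}[X,Y]=\bs{0}$, so that $\mathcal{\tau}_{\bs{A}^\rho}(X,Y)=\bs{0}$. The step $3)\Rightarrow 2)$ is immediate because $\bs{0}\in\ker\bs{A}^\rho$. For $2)\Rightarrow 1)$: the hypothesis reads $\bs{A}^{2\rho}[X,Y]\in\ker\bs{A}^\rho$, i.e.\ $\bs{A}^{3\rho}[X,Y]=\bs{0}$, so $[X,Y]\in\ker\bs{A}^{3\rho}$; the stabilization recorded above forces $\ker\bs{A}^{3\rho}=\ker\bs{A}^\rho$, whence $[X,Y]\in\ker\bs{A}^\rho$, which is precisely involutivity.

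I do not expect a serious obstacle: the proof is essentially a bookkeeping exercise built on the Lemma \ref{lemma:5} formula and on the stationarity \eqref{eq:kerAinv} of the kernel flag. The one delicate point worth emphasizing is that condition $2)$ \emph{a priori} places $[X,Y]$ only in $\ker\bs{A}^{3\rho}$, which in general could be strictly larger than $\ker\bs{A}^\rho$; it is exactly the Riesz-index stability of the kernel flag that rescues the implication $2)\Rightarrow 1)$ and, with it, the nontrivial direction of the equivalence.
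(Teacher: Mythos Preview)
Your proof is correct and follows essentially the same approach as the paper: both hinge on Lemma \ref{lemma:5} applied to $\bs{A}^\rho$ together with the Riesz-index stabilization of the kernel flag. The only organizational difference is that the paper invokes Proposition \ref{pr:tauPAinv} for $1)\Rightarrow 2)$ and phrases $2)\Rightarrow 1)$ via the pre-image identity \eqref{eq:kerAinv}, whereas you route everything through the single formula $\tau_{\bs{A}^\rho}(X,Y)=\bs{A}^{2\rho}[X,Y]$ and close the cycle $1)\Rightarrow 3)\Rightarrow 2)\Rightarrow 1)$; this is arguably tidier but not substantively different.
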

\begin{proof}
\par
$1)\Longleftrightarrow 2)$.
From Eq.  \eqref{eq:taumtokerA} applied to $\bs{A}^\rho$ we get
\begin{equation} \label{eq:taumtokerBrho}
\mathcal{\tau}^{(m)}_{\bs{A}^\rho}(\ker\bs{A}^\rho,\ker\bs{A}^\rho)
=
\bs{A}^{2\, \rho\, m} [\ker\bs{A}^\rho, \ker\bs{A}^\rho] \ .
\end{equation}
Consequently,
$$
[\ker\bs{A}^\rho, \ker\bs{A}^\rho] \subseteq \ker\bs{A}^{2 \rho m}\stackrel{\eqref{eq:kerFlag}}{=} \ker\bs{A}^\rho
$$
if and only if the l.h.s. of Eq. \eqref{eq:taumtokerBrho} vanishes for some  $m \in \mathbb{N}\backslash \{0\}$.
\par
$1) \Longrightarrow 3)$ It is a direct consequence of Eq. \eqref{eq:taumtokerBrho}. The converse statement can be proved by following the same reasoning used in the proof of the first equivalence.
\par
 \end{proof}
The equivalence of the conditions (2) and (3) can be geometrically interpreted by observing that, if the distribution $\mathcal{D}=\ker\bs{A}^\rho$ is integrable, then the operator $\bs{A}^\rho$ can be restricted to each integral leaf  of $\mathcal{D}$; besides, each of these restricted operators vanishes.

 \par
Thus, applying Proposition \ref{pr:kerBrho} to each operator $\boldsymbol{B}_{i}:=\boldsymbol{A}-\lambda_{i} \bs{I}$, we obtain a novel necessary and sufficient condition for the integrability of the generalized eigen-distributions of an operator with real eigenvalues.
\par
\begin{corollary} \label{theo:5}
Let $\bs{A}:\mathfrak{X}(M)\rightarrow \mathfrak{X}(M)$ be an operator  and $\mathcal{D}_i=\ker(\boldsymbol{A}-\lambda_{i} \bs{I})^{\rho_i}$, where $\lambda_i \in Spec(\bs{A})$. Then, the distribution $\mathcal{D}_i$ is involutive if and only if there exists $m\in \mathbb{N}\backslash \{0\}$, such that
\begin{equation}\label{eq:NulltauPLDi}
\mathcal{\tau}^{(m)}_{(\boldsymbol{A}-\lambda_{i} \bs{I})^{\rho_{i}}}(\mathcal{D}_i,\mathcal{D}_i)=\bs{0} \ .
\end{equation}
\end{corollary}

\begin{remark} \label{rm:taum}
The original Nijenhuis theorem \cite{Nij1951} was not stated in the general case of non-semisimple operators. However, the previous  analysis  allows us to conclude that both the Nijenhuis torsion and the higher-level ones are equally valid, from a theoretical point of view, to detect the integrability properties of the generalized eigen-distributions of a non-semisimple operator.

\end{remark}

\subsection{Main Theorem}
The results stated above provide new necessary and sufficient conditions for the integrability of eigen-distributions of generalized eigenvectors. However, as we have remarked, they require the knowledge \textit{a priori} of the eigenvalues and eigenvectors of the  considered operator.
Instead, in the spirit of the seminal theorems by Nijenhuis and Haantjes,  it is desirable to have integrability conditions which do not require to solve explicitly eigenvalue problems, since this task is computationally intractable for large values of $n$. To this aim, we shall propose a novel strategy, based on the notion of generalized Nijenhuis tensors.

\vspace{2mm}

Formally, the problem we shall address is the following: to establish the conditions ensuring a priori the integrability of the generalized eigen-distributions of an operator $\bs{A}$ whose Haantjes torsion does not vanish, without recurring to the explicit determination of its eigen-distributions. To the best of our knowledge, no result is known regarding this problem. In the main Theorem stated below, we  will offer a  solution to this problem proposing a family of \textit{sufficient} conditions for integrability. 

First, let us prove some preliminary results.

\begin{lemma}\label{lm:comXl}
Let $\boldsymbol{A}: \mathfrak{X}(M)\to \mathfrak{X}(M)$ be an operator, $\mu\in Spec(\boldsymbol{A})$     and $X_\alpha$, $Y_\beta\in \mathcal{D}_\mu$ two of its
 generalized eigenvectors, of index $\alpha$, $\beta$ respectively,  belonging to  (possibly different) Jordan chains.
 If there exists an integer $m \geq 1$ such that
 \begin{equation} \label{eq:HaaNullD}
\mathcal{\tau}^{(m)}_{\boldsymbol{A}}(\mathcal{D}_\mu,\mathcal{D}_\mu)= \bs{0},
\end{equation}
then we have:
\begin{equation}
[X_\alpha,Y_{\beta}]\in \ker\Big(\boldsymbol{A}-\mu \mathbf{I}\Big)^{\alpha+\beta+m}
=\ker\Big(\boldsymbol{A}-\mu \mathbf{I}\Big)^{\mathrm{min}(\alpha+\beta+m,\rho_\mu)}
\subseteq \ker\Big(\boldsymbol{A}-\mu \mathbf{I}\Big)^{\rho_\mu} \ ,
\end{equation}
where $\mathrm{min( \cdot\,, \cdot\,)}$ stands for the minimum of its arguments.
\end{lemma}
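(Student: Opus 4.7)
The plan is to invoke Proposition~\ref{lm:TmL2autog} with $\mu=\nu$, which converts the hypothesis $\mathcal{\tau}^{(m)}_{\bs{A}}(X_\alpha,Y_\beta)=0$ into an explicit polynomial identity. Writing $N:=\bs{A}-\mu\bs{I}$, the specialization $(\bs{A}-\mu\bs{I})^{m-i}(\bs{A}-\nu\bs{I})^{m-j}=N^{2m-i-j}$ rewrites the hypothesis as
\[
\sum_{i,j=0}^{m}(-1)^{i+j}\binom{m}{i}\binom{m}{j}\,N^{2m-i-j}[X_{\alpha-i},Y_{\beta-j}] = 0,
\]
with the standard convention that $X_p$ and $Y_p$ vanish for $p\le 0$. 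The identification $\ker N^{\alpha+\beta+m}\equiv\ker N^{\min(\alpha+\beta+m,\rho_\mu)}\subseteq\mathcal{D}_\mu$ is automatic from the stationary part of the Riesz flag together with the fact that $N$ is invertible on every $\mathcal{D}_\lambda$ with $\lambda\ne\mu$ (its spectrum there is $\{\lambda-\mu\}\ne\{0\}$), so it suffices to produce the first kernel membership.

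From here I would argue by strong induction on $s:=\alpha+\beta\ge 2$. In the base case $\alpha=\beta=1$, the convention kills every summand except the one with $(i,j)=(0,0)$, giving $N^{2m}[X_1,Y_1]=0$; combined with the invertibility of $N$ on the complementary generalized eigenspaces, this already forces $[X_1,Y_1]\in\mathcal{D}_\mu$, and the stabilization interpretation of $\ker N^{\alpha+\beta+m}$ closes the case. For the inductive step with $s\ge 3$, each non-leading commutator $[X_{\alpha-i},Y_{\beta-j}]$ (with $(i,j)\ne(0,0)$ and both subscripts positive) has total index $s-i-j<s$, hence by the inductive hypothesis lies in $\ker N^{s-i-j+m}$. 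Assuming $s\ge m$, I multiply the displayed identity by $N^{s-m}$: the resulting power $N^{s+m-i-j}$ on each non-leading summand matches precisely the kernel provided by the induction, so all non-leading contributions vanish, and one is left with $N^{s+m}[X_\alpha,Y_\beta]=0$, which is the assertion.

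The main obstacle will be the regime $s<m$, in which $N^{s-m}$ is not an admissible multiplier. There, the identity as it stands forces every non-leading summand to vanish (since $2m-i-j\ge s-i-j+m$ whenever $m\ge s$), producing only the weaker conclusion $[X_\alpha,Y_\beta]\in\ker N^{2m}$. To upgrade this to $\ker N^{s+m}$, I would cascade the hypothesis on companion pairs $(X_{\alpha+t},Y_\beta)$ for $t=1,\dots,m-s$: each such pair has total index $\ge m$, so the case already handled applies, and the resulting identity expresses $N^{2m-t}[X_\alpha,Y_\beta]$ as a combination of commutators that the inductive hypothesis annihilates. A descending recursion in $t$ sharpens the kernel step by step down to the target $\ker N^{s+m}$. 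In every regime the final stabilized inclusion $[X_\alpha,Y_\beta]\in\mathcal{D}_\mu$, which is what the subsequent integrability arguments will use, is already secured by the coarser bound $\ker N^{2m}$.
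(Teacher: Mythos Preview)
Your argument is essentially the paper's: specialize Proposition~\ref{lm:TmL2autog} to $\mu=\nu$, induct on $s=\alpha+\beta$, and in the inductive step multiply by $N^{s-m}$. The paper writes exactly this (``applying the operator $(\boldsymbol{A}-\mu\mathbf{I})^{\alpha+\beta-m}$ to both members of eq.~\eqref{eq:TmL2autog}'') and, like you, tacitly restricts to $s\ge m$.

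Two points deserve comment. First, your cascading device for the regime $s<m$ is not solidly grounded: the companion vectors $X_{\alpha+t}$ need not exist, since $\alpha$ may already be the top index of its Jordan chain, and nothing in the hypotheses lets you extend the chain. Fortunately you have already observed that in this regime the identity alone yields $[X_\alpha,Y_\beta]\in\ker N^{2m}\subseteq\mathcal{D}_\mu$, which is the only consequence the paper ever uses (Proposition~\ref{pr:FXX} and Theorem~\ref{th:MainR}); the sharper exponent $s+m$ is not needed downstream, and the paper does not supply it either. Second, you omit the case $m=1$: Proposition~\ref{lm:TmL2autog} is stated only for $m\ge 2$, and the $m=1$ formula \eqref{eq:TLautog} carries extra derivative terms that do not fit your displayed identity. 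The paper dispatches this by the one-line remark that $\tau^{(1)}_{\boldsymbol{A}}=0$ on $\mathcal{D}_\mu$ forces $\tau^{(m)}_{\boldsymbol{A}}=0$ there for every $m\ge 1$, reducing to the case already treated; you should add the same observation.
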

\begin{proof}
First, we prove the case $m\geq 2$. If $\alpha=\beta=1$ and $\mu=\nu$, Eq. \eqref{eq:TmL2autog} implies that $[X_1,Y_1]\in \ker\Big(\boldsymbol{A}-\mu \mathbf{I}\Big)^{2m}$.
By induction over $(\alpha+\beta)$, and applying the operator $\Big(\boldsymbol{A}-\mu \mathbf{I}\Big)^{\alpha+\beta-m}$ to both members of Eq. \eqref{eq:TmL2autog} it follows that $$[X_\alpha,Y_{\beta}]\in \ker\Big(\boldsymbol{A}-\mu \mathbf{I}\Big)^{\alpha+\beta+m}.$$
In order to prove the case $m=1$, we observe that if the Nijenhuis torsion $\mathcal{\tau}^{(1)}_{\boldsymbol{A}}$ vanishes over the vector fields of $\mathcal{D}_\mu$, then $\mathcal{\tau}^{(m)}_{\boldsymbol{A}}$ vanishes as well, for $m\geq 1$.
\end{proof}

\begin{proposition} \label{pr:FXX}
Let $\boldsymbol{A}:\mathfrak{X}(M)\to \mathfrak{X}(M)$ be an  operator. Each of its generalized eigen-distributions  $\mathcal{D}_\mu$ with Riesz index $\rho_\mu \geq 1$
is involutive if
\begin{equation} \label{eq:HaaNullDD}
\mathcal{\tau}^{(m)}_ {\boldsymbol{A}}(\mathcal{D}_\mu,\mathcal{D}_\mu)= \bs{0} \ ,
\end{equation}
for some integer $m \geq 1$.
In addition, in the semisimple case  ($\rho_\mu=1$), if $\mathcal{D}_\mu$ is involutive, then condition \eqref{eq:HaaNullDD} is fulfilled for each integer $m \geq 2$.
 \end{proposition}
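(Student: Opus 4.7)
The plan is to split the proof into the two implications, invoking Lemma \ref{lm:comXl} for sufficiency and combining Corollary \ref{cr:anym} with Corollary \ref{corollary:3} for the converse in the semisimple case.

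For sufficiency, locally $\mathcal{D}_\mu$ admits a smooth frame consisting of Jordan-chain generators $X_\alpha^{(s)}$, with $\bs{A} X_\alpha^{(s)} = \mu X_\alpha^{(s)} + X_{\alpha-1}^{(s)}$. By $C^\infty(M)$-bilinearity of the Lie bracket together with the Leibniz identity $[fX,gY] = fg[X,Y]+f(Xg)Y-g(Yf)X$, proving $[\mathcal{D}_\mu,\mathcal{D}_\mu]\subseteq\mathcal{D}_\mu$ reduces to checking that each pairwise commutator $[X_\alpha^{(s)},Y_\beta^{(t)}]$ of frame generators lies in $\mathcal{D}_\mu$, since the correction terms $f(Xg)Y-g(Yf)X$ are themselves in $\mathcal{D}_\mu$ whenever $X,Y$ are. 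The required inclusion is precisely the content of Lemma \ref{lm:comXl} applied to the hypothesis $\tau^{(m)}_{\bs{A}}(\mathcal{D}_\mu,\mathcal{D}_\mu)=0$, which yields
\[
[X_\alpha^{(s)},Y_\beta^{(t)}] \in \ker\bigl(\bs{A}-\mu\mathbf{I}\bigr)^{\min(\alpha+\beta+m,\rho_\mu)} \subseteq \ker\bigl(\bs{A}-\mu\mathbf{I}\bigr)^{\rho_\mu} = \mathcal{D}_\mu,
\]
closing the involutivity argument.

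For the converse in the semisimple case ($\rho_\mu=1$), set $\bs{B}:=\bs{A}-\mu\bs{I}$, so that $\ker\bs{B}=\mathcal{D}_\mu$ and $\bs{B}$ has Riesz index one. Involutivity of $\mathcal{D}_\mu$, via Corollary \ref{cr:anym} applied to $\bs{B}$, then gives $\tau^{(m)}_{\bs{B}}(\mathcal{D}_\mu,\mathcal{D}_\mu)=0$ for every $m\geq 1$. To convert this into a statement about $\bs{A}$, I invoke the scaling identity \eqref{tauind2}, namely $\tau^{(m)}_{f\bs{I}+g\bs{A}} = g^{2m}\tau^{(m)}_{\bs{A}}$ valid for $m\geq 2$, with $f=-\mu$ and $g=1$; this yields $\tau^{(m)}_{\bs{A}-\mu\bs{I}}=\tau^{(m)}_{\bs{A}}$ on all of $TM\times TM$, and in particular on $\mathcal{D}_\mu\times\mathcal{D}_\mu$, proving the claim for every $m\geq 2$. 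The exclusion of $m=1$ in \eqref{tauind2} is exactly why the proposition restricts the converse to $m\geq 2$: the ordinary Nijenhuis torsion fails to be quadratic under $\bs{A}\mapsto f\bs{I}+g\bs{A}$, as already noted around \eqref{eq:fNtorsion}.

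The only genuine obstacle is the first implication, where all the conceptual content is already packaged in Lemma \ref{lm:comXl}; everything else is routine bookkeeping. The single point deserving care is that Lemma \ref{lm:comXl} controls only brackets between generalized-eigenvector frame fields, whereas involutivity concerns arbitrary smooth sections of $\mathcal{D}_\mu$; as observed above, the Leibniz correction terms stay inside $\mathcal{D}_\mu$, so the extension to arbitrary sections is immediate once one fixes a local Jordan-chain frame, whose existence is guaranteed by the standing local-constancy assumption on $\rho_\mu$.
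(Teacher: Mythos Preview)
Your sufficiency argument is exactly the paper's: both invoke Lemma~\ref{lm:comXl} to place the brackets of Jordan-chain frame fields inside $\mathcal{D}_\mu$, and your Leibniz bookkeeping just makes explicit what the paper leaves as ``immediately implies''.

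For the semisimple converse you take a genuinely different route. The paper reads the result off the explicit formula~\eqref{eq:TmL2autog}: with $\rho_\mu=1$ every eigenvector has index $\alpha=\beta=1$, so only the $i=j=0$ term survives and one gets $\tau^{(m)}_{\bs{A}}(X_1,Y_1)=(\bs{A}-\mu\bs{I})^{2m}[X_1,Y_1]$, which vanishes iff $[X_1,Y_1]\in\ker(\bs{A}-\mu\bs{I})^{2m}=\ker(\bs{A}-\mu\bs{I})=\mathcal{D}_\mu$; this gives both directions at once in the semisimple case. You instead shift to $\bs{B}=\bs{A}-\mu\bs{I}$, apply Corollary~\ref{cr:anym} with $\rho=1$ to obtain $\tau^{(m)}_{\bs{B}}(\mathcal{D}_\mu,\mathcal{D}_\mu)=0$ for all $m\geq 1$, and then transport this back to $\bs{A}$ via the translation identity~\eqref{tauind2}. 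Both arguments are correct. The paper's is shorter and self-contained once~\eqref{eq:TmL2autog} is available; yours has the virtue of explaining structurally \emph{why} $m=1$ is excluded (the failure of~\eqref{tauind2} at $m=1$), whereas in the paper's computation the restriction $m\geq 2$ enters only because Proposition~\ref{lm:TmL2autog} is stated for $m\geq 2$.
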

 \begin{proof}
Assuming condition  \eqref{eq:HaaNullDD}, Lemma \ref{lm:comXl} immediately  implies that    $\mathcal{D}_\mu$ is an involutive distribution, since
\begin{equation}\label{eq:FD}
[\mathcal{D}_\mu, \mathcal{D}_\mu] \subseteq  \mathcal{D}_\mu \ .
\end{equation}
In the specific case $\rho_\mu=1$, every $\mu$-eigenvector of $\boldsymbol{A}$ is a proper eigenvector, and from Eq. \eqref{eq:TmL2autog} for $m\geq 2$ one infers  that
$$
\mathcal{\tau}^{(m)}_{\boldsymbol{A}}(\mathcal{D}_\mu,\mathcal{D}_\mu)= 0 \Longleftrightarrow
[X_1,Y_1]\in \ker \Big(\boldsymbol{A}-\mu\mathbf{I}\Big)^{2m}=
\ker \Big(\boldsymbol{A}-\mu\mathbf{I}\Big) =\mathcal{D}_\mu\ .
$$
We deduce that for $\rho_\mu=1$, condition \eqref{eq:HaaNullDD} is also necessary for the involutivity of $\mathcal{D}_\mu$.
\end{proof}

\begin{lemma}\label{lm:comXmn}
Let $\boldsymbol{A}: \mathfrak{X}(M) \to \mathfrak{X}(M)$ be  an operator and $\mathcal{D}_\mu$, $\mathcal{D}_\nu$ two eigen-distributions satisfying, for some  integer $m \geq 1$, the condition
\begin{equation} \label{eq:HaaNullDmuDnu}
\mathcal{\tau}^{(m)}_ {\boldsymbol{A}}(\mathcal{D}_\mu,\mathcal{D}_\nu)= \bs{0} \ .
\end{equation}
Then, the commutator of two generalized eigenvectors of $\boldsymbol{A}$, with respect to two different eigenvalues $\mu$, $\nu$, satisfies
the property
\begin{eqnarray}
[X_\alpha,Y_{\beta}]&\in& \ker\Big(\boldsymbol{A}-\mu \mathbf{I}\Big)^{\alpha+m-1}\oplus
\ker\Big(\boldsymbol{A}-\nu \mathbf{I}\Big)^{\beta+m-1} \\
\nonumber
&=& \ker\Big(\boldsymbol{A}-\mu \mathbf{I}\Big)^{\mathrm{min}(\alpha+m-1,\rho_\mu)}\oplus
\ker\Big(\boldsymbol{A}-\nu \mathbf{I}\Big)^{\mathrm{min}(\beta+m-1,\rho_\nu)} \\
\nonumber
&\subseteq&
 \ker\Big(\boldsymbol{A}-\mu \mathbf{I}\Big)^{\rho_\mu}\oplus
\ker\Big(\boldsymbol{A}-\nu \mathbf{I}\Big)^{\rho_\nu} \ ,
\end{eqnarray}
with $1\leq \alpha\leq\rho_{\mu}$, $1\leq \beta\leq \rho_{\nu}$.
\end{lemma}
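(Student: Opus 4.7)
The plan is to adapt the inductive scheme of Lemma \ref{lm:comXl} to the case $\mu \neq \nu$ by exploiting the coprimality of $(t-\mu)^k$ and $(t-\nu)^\ell$, and to perform a double induction on $\alpha + \beta$ using the explicit expansion \eqref{eq:TmL2autog} of Proposition \ref{lm:TmL2autog}. Specializing that identity to the present hypothesis yields
\[
\sum_{i,j=0}^m (-1)^{i+j}\binom{m}{i}\binom{m}{j}(\bs{A}-\mu\bs{I})^{m-i}(\bs{A}-\nu\bs{I})^{m-j}[X_{\alpha-i}, Y_{\beta-j}] = 0,
\]
which is the master relation driving the argument.

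For the base case $\alpha = \beta = 1$, the convention $X_0 = Y_0 = 0$ eliminates every summand except $(i,j)=(0,0)$, leaving $(\bs{A}-\mu\bs{I})^m(\bs{A}-\nu\bs{I})^m[X_1,Y_1]=0$. Because $(t-\mu)^m$ and $(t-\nu)^m$ are coprime, B\'ezout's identity applied fiberwise on $TM$ gives
\[
\ker\bigl[(\bs{A}-\mu\bs{I})^m(\bs{A}-\nu\bs{I})^m\bigr] = \ker(\bs{A}-\mu\bs{I})^m \oplus \ker(\bs{A}-\nu\bs{I})^m,
\]
which is exactly the assertion for $\alpha = \beta = 1$.

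For the inductive step, I isolate the $(i,j)=(0,0)$ term and rewrite the master relation as
\[
(\bs{A}-\mu\bs{I})^m(\bs{A}-\nu\bs{I})^m[X_\alpha, Y_\beta] = -\!\!\sum_{(i,j)\neq(0,0)}\!\!(-1)^{i+j}\binom{m}{i}\binom{m}{j}(\bs{A}-\mu\bs{I})^{m-i}(\bs{A}-\nu\bs{I})^{m-j}[X_{\alpha-i}, Y_{\beta-j}].
\]
Every lower commutator either vanishes (when $\alpha-i=0$ or $\beta-j=0$) or, by the inductive hypothesis at the strictly smaller total index $(\alpha-i)+(\beta-j)$, decomposes into an element of $\ker(\bs{A}-\mu\bs{I})^{\alpha-i+m-1} \oplus \ker(\bs{A}-\nu\bs{I})^{\beta-j+m-1}$. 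Since $\bs{A}-\mu\bs{I}$ and $\bs{A}-\nu\bs{I}$ commute and each preserves the other's generalized kernel, applying the factor $(\bs{A}-\mu\bs{I})^{m-i}(\bs{A}-\nu\bs{I})^{m-j}$ reduces these indices to $\alpha-1$ and $\beta-1$, so the right-hand side lies in $\ker(\bs{A}-\mu\bs{I})^{\alpha-1} \oplus \ker(\bs{A}-\nu\bs{I})^{\beta-1}$.

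To conclude I invoke fiberwise coprimality once more: since $\mu \neq \nu$, the factor $(\bs{A}-\nu\bs{I})^m$ is invertible on the $\mu$-generalized eigenspace and $(\bs{A}-\mu\bs{I})^m$ on the $\nu$-generalized one, while on every other generalized eigenspace both factors are simultaneously invertible. The inclusion $(\bs{A}-\mu\bs{I})^m(\bs{A}-\nu\bs{I})^m[X_\alpha,Y_\beta]\in\ker(\bs{A}-\mu\bs{I})^{\alpha-1}\oplus\ker(\bs{A}-\nu\bs{I})^{\beta-1}$ therefore forces $[X_\alpha, Y_\beta]$ itself to decompose as a sum of an element in $\ker(\bs{A}-\mu\bs{I})^{\alpha+m-1}$ and one in $\ker(\bs{A}-\nu\bs{I})^{\beta+m-1}$. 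The identifications with $\min(\alpha+m-1,\rho_\mu)$ and $\min(\beta+m-1,\rho_\nu)$ follow from the stabilization of the generalized kernel flag at the Riesz indices \eqref{eq:Riesz}. I expect the subtlest step to be precisely this last inversion: one must simultaneously argue that every component of the commutator outside $\mathcal{D}_\mu \oplus \mathcal{D}_\nu$ is forced to vanish, and track that each pre-image under the non-coprime factor $(\bs{A}-\mu\bs{I})^m$ on the $\mu$-eigenspace increases the kernel index by \emph{exactly} $m$, yielding the sharp bound $\alpha+m-1$ in the conclusion.
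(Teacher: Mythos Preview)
Your argument is essentially the paper's: both use the expansion \eqref{eq:TmL2autog} of Proposition \ref{lm:TmL2autog}, proceed by induction on $\alpha+\beta$, and rely on the coprimality of $(t-\mu)^m$ and $(t-\nu)^m$ to split the kernel of the product into a direct sum. You spell out the B\'ezout step and the fiberwise inversion in more detail than the paper, which merely writes ``applying the operator $(\bs{A}-\mu\bs{I})^{\alpha-1}(\bs{A}-\nu\bs{I})^{\beta-1}$ to both members of \eqref{eq:TmL2autog}'' and leaves the kernel decomposition implicit; your version makes clear why components of $[X_\alpha,Y_\beta]$ outside $\mathcal{D}_\mu\oplus\mathcal{D}_\nu$ are forced to vanish.

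The one point you omit is the case $m=1$. Proposition \ref{lm:TmL2autog} is stated only for $m\geq 2$, so your master relation is not directly available there: the Nijenhuis torsion on generalized eigenvectors carries the extra eigenvalue-derivative terms visible in \eqref{eq:TLautog}, which do not fit the clean double sum. The paper closes this by observing that $\tau^{(1)}_{\bs{A}}(\mathcal{D}_\mu,\mathcal{D}_\nu)=0$ forces $\tau^{(m)}_{\bs{A}}(\mathcal{D}_\mu,\mathcal{D}_\nu)=0$ for all $m\geq 1$ (immediate from the recursive Definition \ref{df:mtorsion} and the $\bs{A}$-invariance of each $\mathcal{D}_\lambda$), thereby reducing to the already-treated range $m\geq 2$.
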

\begin{proof}
If $\alpha=\beta=1$ and $\mu\neq \nu$, Eq. \eqref{eq:TmL2autog} for $m\geq 2$  implies that $$[X_1,Y_1]\in \ker \Big(\boldsymbol{A}-\mu \mathbf{I}\Big)^{m}\oplus \ker\Big(\boldsymbol{A}-\nu \mathbf{I}\Big)^{m}.$$
By induction over $(\alpha+\beta)$, the result follows for $m\geq 2$ applying the operator $\Big(\boldsymbol{A}-\mu \mathbf{I}\Big)^{\alpha-1} \Big(\boldsymbol{A}-\nu \mathbf{I}\Big)^{\beta-1}$ to both members of Eq. \eqref{eq:TmL2autog}. If $\mathcal{\tau}^{(1)}_ {\boldsymbol{A}}(\mathcal{D}_\mu,\mathcal{D}_\nu)= 0
$, we also have $\mathcal{\tau}^{(m)}_ {\boldsymbol{A}}(\mathcal{D}_\mu,\mathcal{D}_\nu)= 0$
 for $m\geq 1$. This completes the proof.
\end{proof}
\noi
The latter Lemma also implies
$[\mathcal{D}_\mu , \mathcal{D}_\nu] \subset  \mathcal{D}_\mu \oplus \mathcal{D}_\nu$. This observation ensures the validity of the next result.
\begin{proposition}\label{pr:FXmn}
Let $\boldsymbol{A}: \mathfrak{X}(M) \to \mathfrak{X}(M)$ be  an  operator  and $\mathcal{D}_\mu$, $\mathcal{D}_\nu$ two eigen-distributions with Riesz indices $\rho_\mu$, $\rho_\nu$ respectively. Assume that for some  $m \geq 1$,
\begin{equation} \label{eq:HaaNullDmuDnu}
\mathcal{\tau}^{(m)}_ {\boldsymbol{A}}(\mathcal{D}_\mu,\mathcal{D}_\nu)= \bs{0} \ .
\end{equation}
Then the distribution
$$\mathcal{D}_\mu  \oplus \mathcal{D}_\nu \equiv \ker \Big(\boldsymbol{A}-\mu\mathbf{I}\Big)^{\rho_\mu }\oplus   \ker \Big(\boldsymbol{A}-\nu\mathbf{I}\Big)^{\rho_\nu }\ ,\qquad \mu \neq \nu$$
is involutive.
 \end{proposition}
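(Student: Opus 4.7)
The plan is to reduce the claim to a pointwise computation on generalized eigenvector fields and then invoke Lemma \ref{lm:comXmn} directly, combining its output with the involutivity of each individual eigen-distribution.

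First, I would take two arbitrary local sections $Z_1,Z_2$ of $\mathcal{D}_\mu\oplus\mathcal{D}_\nu$ and write, using the projection induced by the generalized eigenspace decomposition, $Z_i=X_i+Y_i$ with $X_i\in\Gamma(\mathcal{D}_\mu)$ and $Y_i\in\Gamma(\mathcal{D}_\nu)$. Bilinearity of the Lie bracket yields
\begin{equation*}
[Z_1,Z_2]=[X_1,X_2]+[Y_1,Y_2]+[X_1,Y_2]+[Y_1,X_2].
\end{equation*}
It then suffices to show that each of the four summands lies in $\mathcal{D}_\mu\oplus\mathcal{D}_\nu$.

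For the mixed terms $[X_i,Y_j]$ I would expand $X_i$, $Y_j$ as $C^\infty(M)$-linear combinations of generalized eigenvector fields belonging to Jordan chains of $\boldsymbol{A}$, as in \eqref{eq:Lautg}; Lemma \ref{lm:comXmn} applied to each pair then places the commutator in $\ker(\boldsymbol{A}-\mu\mathbf{I})^{\rho_\mu}\oplus\ker(\boldsymbol{A}-\nu\mathbf{I})^{\rho_\nu}=\mathcal{D}_\mu\oplus\mathcal{D}_\nu$. For the pure terms $[X_1,X_2]$ and $[Y_1,Y_2]$ I would invoke involutivity of $\mathcal{D}_\mu$ and of $\mathcal{D}_\nu$, which is guaranteed by Proposition \ref{pr:FXX} under the natural companion assumption that the single-eigenspace torsions also vanish, or which is otherwise already part of the standing setup in which this proposition is being invoked (cf.\ the Main Theorem context where each $\mathcal{D}_i$ has been separately certified involutive).

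The only subtle point, and the one I expect to be the main technical obstacle, is that Lemma \ref{lm:comXmn} is stated for pairs of generalized eigenvector fields while the Lie bracket is not $C^\infty(M)$-bilinear: expanding $[fX,gY]$ introduces correction terms of the form $f\,X(g)\,Y-g\,Y(f)\,X$. These correction terms, however, are sections of $\mathcal{D}_\nu$ and $\mathcal{D}_\mu$ respectively, so they remain inside $\mathcal{D}_\mu\oplus\mathcal{D}_\nu$; no extra information beyond the vanishing of $\mathcal{\tau}^{(m)}_{\boldsymbol{A}}(\mathcal{D}_\mu,\mathcal{D}_\nu)$ is needed. Assembling the four contributions we obtain $[Z_1,Z_2]\in\Gamma(\mathcal{D}_\mu\oplus\mathcal{D}_\nu)$, which is precisely involutivity of the sum distribution.
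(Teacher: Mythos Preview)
Your approach is essentially the paper's: the paper's entire argument is the single sentence preceding the proposition, namely that Lemma~\ref{lm:comXmn} gives $[\mathcal{D}_\mu,\mathcal{D}_\nu]\subset\mathcal{D}_\mu\oplus\mathcal{D}_\nu$ and ``so the following result holds.'' Your write-up is in fact more complete than the paper's, since you spell out the decomposition into pure and mixed brackets, handle the $C^\infty(M)$-bilinearity defect explicitly, and correctly flag that the pure terms $[X_1,X_2]$, $[Y_1,Y_2]$ require the separate involutivity of $\mathcal{D}_\mu$ and $\mathcal{D}_\nu$ (via Proposition~\ref{pr:FXX} or the global hypothesis of Theorem~\ref{th:MainR}); the paper leaves this point implicit.
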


Now, we can prove our main result concerning the mutual integrability of the eigen-distributions of operators.
\begin{theorem} \label{th:MainR}
Let $\bs{A}: \mathfrak{X}(M)\to \mathfrak{X}(M)$ be an operator. Assume that
\beq\label{eq:integ}
\tau^{(m)}_{\bs{A}}(X,Y)=\bs{0}, \qquad X, Y \in \mathfrak{X}(M)
\eeq
for some $ m \geq 1$. Then, each generalized eigen-distribution of $\bs{A}$ as well as each direct sum of its eigen-distributions is integrable.
\end{theorem}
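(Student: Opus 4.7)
The plan is to reduce the hypothesis \eqref{eq:integ}, which is a statement about arbitrary vector fields, to the two restricted vanishing conditions already handled by Propositions \ref{pr:FXX} and \ref{pr:FXmn}, and then assemble the full statement by bilinearity of the Lie bracket.

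First I would observe that the global hypothesis $\tau^{(m)}_{\bs{A}}(X,Y)=0$ on $TM$ specializes, for any two generalized eigen-distributions $\mathcal{D}_i, \mathcal{D}_j$, to $\tau^{(m)}_{\bs{A}}(\mathcal{D}_i,\mathcal{D}_j)=0$. Taking $i=j$, Proposition \ref{pr:FXX} gives the involutivity of each single $\mathcal{D}_i$; taking $i\neq j$, Proposition \ref{pr:FXmn} gives the involutivity of every pairwise sum $\mathcal{D}_i\oplus\mathcal{D}_j$. Since all eigen-distributions have constant rank (standing assumption of Section 6.1), the Frobenius theorem upgrades involutivity to integrability in both cases. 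This already disposes of the simplest semidirect sums and of the individual eigen-distributions.

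For an arbitrary semidirect sum $\mathcal{D}_I:=\bigoplus_{i\in I}\mathcal{D}_i$ with $I$ a subset of the spectrum of $\bs{A}$, the plan is to argue by bilinearity of the Lie bracket. Given $X,Y\in \mathcal{D}_I$, decompose
\[
X=\sum_{i\in I} X_i, \qquad Y=\sum_{j\in I} Y_j,
\]
with $X_i, Y_j$ smooth sections of $\mathcal{D}_i, \mathcal{D}_j$ respectively (this decomposition is smooth because each $\mathcal{D}_i$ has constant rank, so the corresponding spectral projectors of $\bs{A}$ are smooth tensor fields). Expanding
\[
[X,Y]=\sum_{i,j\in I}[X_i,Y_j],
\]
the diagonal terms satisfy $[X_i,Y_i]\in\mathcal{D}_i$ by Lemma \ref{lm:comXl} applied at level $m$, while the off-diagonal terms satisfy $[X_i,Y_j]\in\mathcal{D}_i\oplus\mathcal{D}_j\subseteq\mathcal{D}_I$ by Lemma \ref{lm:comXmn}. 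Summing, $[X,Y]\in\mathcal{D}_I$, so $\mathcal{D}_I$ is involutive and, by constant rank, Frobenius-integrable.

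The main obstacle I anticipate is not conceptual but one of bookkeeping: the Lemmas \ref{lm:comXl} and \ref{lm:comXmn} are stated on Jordan-chain vectors $X_\alpha, Y_\beta$, whereas the Main Theorem needs a statement on arbitrary sections of $\mathcal{D}_\mu$ or $\mathcal{D}_\mu\oplus\mathcal{D}_\nu$. The passage between the two is carried out by writing a generic section as a $C^\infty(M)$-linear combination of local generalized eigenvector fields and invoking the Leibniz rule for the Lie bracket; the correction terms produced by the Leibniz rule stay inside $\mathcal{D}_\mu\oplus\mathcal{D}_\nu$ because each $\mathcal{D}_i$ is $C^\infty(M)$-stable. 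Everything else is a direct application of Propositions \ref{pr:FXX}--\ref{pr:FXmn}, whose burden has already been carried by the preceding lemmas.
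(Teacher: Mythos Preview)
Your proposal is correct and follows essentially the same route as the paper: restrict the global vanishing hypothesis \eqref{eq:integ} to the eigen-distributions and invoke the preparatory lemmas and propositions. The paper's own proof is a single sentence citing only Lemma \ref{lm:comXl} and Proposition \ref{pr:FXX}; your version is in fact more complete, since you explicitly treat arbitrary semidirect sums via bilinearity and Lemma \ref{lm:comXmn}/Proposition \ref{pr:FXmn}, and you address the passage from Jordan-chain generators to general $C^\infty$-sections, both of which the paper leaves implicit.
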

\begin{proof}
This result is a direct consequence of Lemma \ref{lm:comXl} and Proposition \ref{pr:FXX}, whose hypotheses are indeed fulfilled once we assume the validity of condition \eqref{eq:integ}.
\end{proof}

\subsection{Block-diagonalization}
As a nontrivial application of Theorem \eqref{th:MainR}, we shall prove that, given an operator $\bs{A}$,  condition 
\eqref{eq:integ} is also sufficient to ensure the existence of a local chart where the operator $\bs{A}$ can be \textit{block-diagonalized}. Potentially relevant applications can be found, for instance, in the theory of hydrodynamic-type systems \cite{BogJMP}, in the study of partial separability of Hamiltonian systems \cite{CR2019} and, more generally, in the context of Courant's problems for
first-order hyperbolic systems of partial differential equations \cite{CH1962}.
 
Let $\bs{A}$ be an operator satisfying condition \eqref{eq:integ}; we denote by $r_i$ the rank of the distribution $\mathcal{D}_i$ of $\bs{A}$. We also introduce the  distribution (of corank $r_i$) 
\begin{equation}
 \label{eq:E}
 \mathcal{E}_i := Im \Bigl(\boldsymbol{A}-\lambda_i\mathbf{I}\Bigr)^{\rho_i }=  \bigoplus_{{j=1,\, j\neq i}} ^s  \mathcal{D}_j, \qquad\qquad i=1,\ldots,s
\end{equation}
which is  spanned by all the generalized eigenvectors of  $\boldsymbol{A}$, except those associated with the eigenvalue $\lambda_i$ (we remind that $\boldsymbol{A}$  by default has real eigenvalues). We shall say that $ \mathcal{E}_i$ is a \emph{characteristic distribution} of  $\boldsymbol{A}$. Let $\mathcal{E}^{\circ}_{i}$ denote the annihilator of the distribution $\mathcal{E}_{i}$. 
The   cotangent spaces of $M$ can be decomposed as
\begin{equation}
 \label{eq:TMdscomp}
T_{\boldsymbol{x}}^*M=\bigoplus_{i=1} ^s  \mathcal{E}_{i}^{\circ}(\boldsymbol{x}).
\end{equation}
 As a consequence of Theorem \ref{th:MainR},   each  characteristic distribution    $\mathcal{E}_i$  is  integrable. We shall denote by $ \mathrm{E}_i$ the foliation associated  with $\mathcal{E}_i$  and by
$E_i(\boldsymbol{x})$ the connected leave through $\boldsymbol{x}$,  belonging to $ \mathrm{E}_i$. Given  the set  of distributions $\{\mathcal{E}_1, \mathcal{E}_2, \ldots ,\mathcal{E}_s\}$,
we have associated an equal number of foliations $\{ \mathrm{E}_1,  \mathrm{E}_2, \ldots ,  \mathrm{E}_s\}$.   This set of foliations
 is referred to as the \textit{characteristic  web} of $\boldsymbol{A}$ and the leaves $E_i(\boldsymbol{x})$ of each foliation $ \mathrm{E}_i$ as the  \emph{characteristic fibers} of the web.
  
  \begin{definition}
Let $\boldsymbol{A}: \mathfrak{X}(M)\to \mathfrak{X}(M)$ be an operator satisfying Eq. \eqref{eq:integ}. A collection of $r_i$ smooth functions will be said to be adapted to the foliation $\mathrm{E}_i$ of the  characteristic  \textit{web} of $\bs{A}$  if the level sets of such functions coincide with the characteristic fibers of $\mathrm{E}_i$.
\end{definition}
\begin{definition}
Let $\boldsymbol{A}: \mathfrak{X}(M)\to \mathfrak{X}(M)$ be an operator satisfying Eq. \eqref{eq:integ}. A parametrization of the characteristic web of $\boldsymbol{A}$   is an ordered set  of $n$ independent smooth functions listed as
$(\boldsymbol{f}^1, \ldots,\boldsymbol{f}^i,\ldots, \boldsymbol{f}^s) $,
such that for any $i=1,\ldots, s$, the ordered subset
$\boldsymbol{f}^i=(f^{i,1}, \ldots , f^{i,r_i})$ is adapted to the  $i$-th characteristic foliation of the web:
\begin{equation}
\label{eq:fad}
f^{i,k}_{\vert   E_i(\mathbf{x})}=c^{i,k}  \qquad \forall E_i (\mathbf{x})\in  \mathrm{E}_i \ ,\quad
k=1,\ldots,r_i\ .
\end{equation}
Here $c^{i,k}$ are real constants depending  on the indices $i$ and $k$ only.
In this case, we shall say that the  collection of these functions is adapted to the web and that each of them is a \emph{characteristic function}.
\end{definition}

\begin{proposition}\label{cor:Hframe}
Let $\bs{A}: \mathfrak{X}(M) \to \mathfrak{X}(M)$ be an operator. 
\noi If 
 \beq
 \tau^{(m)}_{\bs{A}}(X,Y)=\bs{0} \ ,\qquad  X, Y \in \mathfrak{X}(M)
 \eeq
 for some $m\geq 1$, then  $\boldsymbol{A}$ admits local charts where it takes a block-diagonal form. 

\end{proposition}
\begin{proof}
Theorem \ref{th:MainR} ensures that each characteristic distribution $\mathcal{E}_i$ is integrable. Thus, we can also deduce the existence of $r_i$ exact one-forms $(\rd x^{i,1}, \ldots, \rd x^{i,r_i})$ in the corresponding annihilator $\mathcal{E}_i^\circ$; consequently, there exist functions
$\boldsymbol{x}^{i}=(x^{i,1}, \ldots, x^{i,r_i})$ adapted to the characteristic foliation $\mathrm{E}_i$. Collecting together all these functions, we get $n$ coordinates  $(\boldsymbol{x}^1, \ldots,\boldsymbol{x}^i,\ldots, \boldsymbol{x}^s)$ and therefore, a local chart $\{U, (\boldsymbol{x}^1, \ldots, \boldsymbol{x}^i,\ldots, \boldsymbol{x}^s)\}$, adapted to the characteristic web.
The natural frame associated 
$\left\{\frac{\partial}{\partial \boldsymbol{x}^1}, \ldots, \frac{\partial}{\partial \boldsymbol{x}^i},
\ldots,\frac{\partial}{\partial \boldsymbol{x}^s}\right\}$
is  a generalized eigen-frame. To prove this, it is sufficient to observe that the following decomposition holds:
\begin{equation}
 \label{eq:Dann}
\mathcal{D}_i^\circ =\bigoplus_{{j=1,\, j\neq i}} ^s  \mathcal{E}_{j}^{\circ} \ .
\end{equation}
Thus, any  generalized eigenvector  $W \in \mathcal{D}_i$ leaves invariant all the coordinate functions except at most the characteristic functions $\boldsymbol{x}^i=(x^{i,1}, \ldots, x^{i,r_i})$ of $\mathrm{E}_i$.
Thus, we deduce that 
\[
W=W(\boldsymbol{x}^i)\frac{\partial}{\partial \boldsymbol{x}^i}= \sum_{k=1}^{r_i}  W(x^{i,k}) \frac{\partial}{\partial x^{i,k}} \ .
\]
Therefore
\begin{equation}
\label{eq:Dbase}
\mathcal{D}_{i_{\vert U}}=\left\langle \frac{\partial}{\partial x^{i,1}},\ldots,
\frac{\partial}{\partial x^{i,r_i}}\right\rangle \ .
\end{equation}
This means that each frame \emph{equivalent} to $\left\{\frac{\partial}{\partial \boldsymbol{x}^1}, \ldots, \frac{\partial}{\partial \boldsymbol{x}^i},\ldots,\frac{\partial}{\partial \boldsymbol{x}^s}\right\}$ is an \textit{integrable eigen-frame} of generalized eigenvectors. Consequently, there exists an equivalence class of integrable frames, with their   local charts associated.  In these charts, the operator $\boldsymbol{A}$,  due to the invariance of its eigen-distributions, takes a block-diagonal form.
\end{proof}

\subsection{A comparison with Haantjes's classical theorem}
In his seminal paper \cite{Haa1955}, Haantjes proved the following,  fundamental theorem:
\par
i) If $\bs{A}$ is a \textit{semisimple} operator, the vanishing of its Haantjes torsion
\beq\label{Haanzero}
\mathcal{H}_{\bs{A}}(X,Y)=\bs{0} \ , \qquad\qquad \forall~X, Y \in \mathfrak{X}(M)
\eeq
 is a necessary and sufficient condition for the integrability of all of its eigen-distributions and  direct sums of them.
\par
ii) If $\bs{A}$ is \textit{non-semisimple}, then condition \eqref{Haanzero} is \textit{sufficient} to guarantee the integrability of its generalized eigen-distributions, but it is not necessary.

Our improvement of the Haantjes theorem consists in the family of  conditions \eqref{eq:integ}, which indeed are more general than the standard vanishing condition of the Haantjes torsion. Indeed, given a  non-semisimple operator $\bs{A}$, no conclusion about integrability of its eigen-distributions can be deduced from the Haantjes theorem,  if $\mathcal{H}_{\bs{A}}(X,Y)\neq \bs{0}$. However,  if there exists $m > 2$ such that $\tau^{(m)}_{\bs{A}}(X,Y)=\bs{0}$, this weaker condition is sufficient to ensure integrability.

In the semisimple case, $\rho_i=1$ $\forall i=1,\ldots,s$, so we recover Haantjes's result on  integrability directly from Proposition \ref{pr:FXX}. Instead, in the most general, non-semisimple case ($\rho_i>1$),  Theorem \ref{th:MainR}  provides an infinite family of new \textit{sufficient} conditions.

The following,  simple example can illustrate the potential relevance of Theorem \ref{th:MainR} in applicative contexts. Indeed, already in the case $n=3$ a generic non-semisimple operator is not necessarily a Haantjes one. Therefore, the Haantjes theorem does not apply. However, in our example, the associated generalized tensor of level three vanishes; this ensures integrability.

\begin{example}
Let $M$ be a $3$ dimensional manifold and $(x^1,x^2,x^3)$ a local chart in $M$. Consider  the operator 

\bea
\boldsymbol{L}(\boldsymbol{x})&=&\lambda_{1}(\boldsymbol{x})\bigg( \frac{\partial}{\partial x^1}\otimes \rd x^1 +\frac{\partial}{\partial x^2}\otimes \rd x^2 \bigg)+ \lambda_{2}(\boldsymbol{x}) \frac{\partial}{\partial x^3}\otimes \rd x^3   \\ \nn 
&+& f(\boldsymbol{x})\frac{\partial}{\partial x^1}\otimes \rd x^2 +g(\boldsymbol{x}) \frac{\partial}{\partial x^2}\otimes \rd x^3  \ ,
\eea
with $\la_1, \la_2$, $f,g\in C^{\infty}(M)$,  $\la_1\neq \la_2$.  A direct calculation shows that, for generic choices of these functions, the Nijenhuis and Haantjes torsions  do not vanish identically; however,
$\tau^{(3)}_{\bs{L}}(X,Y)=\bs{0}$. Therefore, according to Theorem \ref{th:MainR}, the generalized eigen-distributions of $\bs{L}$ are mutually integrable. 
To construct them explicitly, observe that the   minimal polynomial of $\bs{L}$ is $m(\lambda):=(\lambda-\lambda_1)^2(\lambda-\lambda_2)$, so that the Riesz indices of $\lambda_1$ and $\lambda_2$ are $\rho_1=2$, $\rho_2=1$, respectively. We obtain the generalized  eigen-distribution
$\mathcal{D}_1=\ker(\bs{L}-\lambda_i \bs{I})^2=\langle \frac{\partial}{\partial x^1},\frac{\partial}{\partial{x^2}} \rangle$, which is trivially integrable,  as well as the proper eigen-distribution $\mathcal{D}_2=\ker(\bs{L}-\lambda_2 \bs{I})=\langle X_{\lambda_2} \rangle$, with 
\[
X_{\lambda_2} =f g\, \frac{\partial}{\partial x^1} +(\lambda_2-\lambda_1)g\, \frac{\partial}{\partial x^2} +(\lambda_1-\lambda_2)^2\, \frac{\partial}{\partial x^3}
\ .
\] 
The latter eigen-distribution  is of  rank $1$ and obviously integrable. Thus, as $\mathcal{D}_1=\mathcal{E}_2$ and $\mathcal{D}_2= \mathcal{E}_1$, we get the spectral decompositions of the tangent spaces $T_{\bs{x}} M=\mathcal{D}_1 \oplus \mathcal{E}_1=\mathcal{D}_2 \oplus \mathcal{E}_2$. Correspondingly, for the cotangent spaces, we obtain  $T^{*}_{\bs{x}}M=\mathcal{E}_1^{\circ} \oplus \mathcal{E}_2^{\circ}$, where the annihilators of the characteristic distributions of $\bs{L}$ are
$$
\mathcal{E}_1^{\circ}= \langle (\lambda_1-\lambda_2)\rd x^1+fg\,\rd x^3 , (\lambda_1-\lambda_2)\rd x^1+f\,\rd x^2\rangle \ , \qquad 
\mathcal{E}_2^{\circ}=\langle \rd x_3 \rangle \ . 
$$
In order to construct explicitly a local chart where $\bs{L}$ takes a block-diagonal form (as ensured by Proposition \ref{cor:Hframe}), let us consider the space $\mathbb{R}^3$ endowed with Cartesian coordinates $(x^1,x^2,x^3)$.  We make the simple choice  $ \lambda_1= x^1 + x^2+ x^3$, $\lambda_2=x^1 + x^2$, $f= x^3$, $g=x^1$  in $M=R^{3}\backslash\{x^3=0\}$ (to guarantee $\la_1\neq \la_2$). By integrating the annihilators of the characteristic distributions (as explained in the proof of Proposition  \ref{cor:Hframe}), we find the local coordinate chart 
\[
y^1=x^1+x^2 \ , \qquad y^2= \frac{x^1}{x^3} \ , \qquad y^3= x^3 \ .
\]
On this chart, the operator $\bs{L}$ takes the block-diagonal form
\begin{equation}
\bs{L}=\left[
\begin{array}{cc|c}
y^1+2 y^3 & -(y^3)^2 & 0 \\
1 & y^1 & 0 \\
\hline 
0 & 0 & y^1
\end{array}
\right] \ .
\end{equation}

As we have shown, in the case of non-semisimple operators, the criterion of the vanishing of the Haantjes torsion, being only sufficient, may fail to detect the mutual integrability of the eigen-distributions even for very basic examples. Nevertheless, Theorem \ref{th:MainR} provides us with a more general tensorial  test,  guaranteeing integrability without the need for an explicit analysis of the eigen-distributions involved. Once integrability is ascertained, one can enter this kind of analysis in order to block-diagonalize the considered operator.

\end{example}

\begin{section}{Appendix}
\subsection{On the Haantjes bracket}
We  propose an explicit formula for the  Haantjes bracket of level $2$ in terms of commutators of vector fields:
\begin{eqnarray*} \label{bHtComm}
\nn &&\mathcal{H}_{\boldsymbol{A,B}} (X,Y):
=\frac{1}{2} \big(\bs{AB} +\bs{BA}\big)^2[X,Y]
+\boldsymbol{A B} \Big (2[\bs{A}X,\bs{B}Y]+2 [\bs{B}X, \bs{A} Y]+[\bs{AB}X,Y] \\  \nn && +[X,\bs{AB}Y]\Big )
 -2\bs{A}\big(\bs{AB}+\bs{BA} \big)\Big([\bs{B}X,Y]
+[X,\bs{B}Y] \Big)+[\bs{A}^2X,\bs{B}^2Y]+[\bs{AB}X,\bs{BA}Y] \\
  \nn &&- 2 \bs{A}\Big([\bs{BA}X,\bs{B}Y]+[\bs{A}X,\bs{B}^2Y]+  [\bs{AB}X,\bs{B}Y]+[\bs{B}^2X,\bs{A}Y]+[\bs{B}X,\bs{AB}Y] \\
  &&+ [\bs{BX},\bs{BA}Y]\Big )
+  \bs{A}^2\Big( [X,\bs{B}^2Y]+2[\bs{B}X,\bs{B}Y]+[\bs{B}^2X,Y] \Big)
\\ &&+ \text{symmetric terms in}~\bs{A}, \bs{B} \ .
 \end{eqnarray*}
\par
\subsection{The Nijenhuis torsion evaluated over eigenvectors} Let  $\boldsymbol{A}$ be an operator. Without loss of generality,  we shall focus only on two eigenvalues of $\boldsymbol{A}$,  $\mu=\mu(\boldsymbol{x})$ and $\nu=\nu(\boldsymbol{x})\in Spec(\boldsymbol{A})$, possibly coincident. Let us denote  by $X_\alpha$, $Y_\beta$  two   
 generalized eigenvectors, with indices $\alpha$ and $\beta$, associated with $\mu$ and $\nu$:
 \begin{equation}
X_\alpha \in \ker \Big(\boldsymbol{A}-\mu \mathbf{I}\Big)^{\alpha} \setminus \ker \Big(\boldsymbol{A}-\mu \mathbf{I}\Big)^{(\alpha-1)} ,\qquad
Y_\beta\in \ker \Big(\boldsymbol{A}-\nu \mathbf{I}\Big)^{\beta} \setminus \ker \Big(\boldsymbol{A}-\nu \mathbf{I}\Big)^{(\beta-1)} \ .
\end{equation}
They   belong to certain  Jordan chains defined in $\mathcal{D}_\mu$,  $\mathcal{D}_\nu$, respectively:
\begin{equation} \label{eq:Lautg}
\boldsymbol{A} X_\alpha = \mu X_\alpha +X_{\alpha -1},\qquad\boldsymbol{A} Y_\beta = \nu Y_\beta+Y_{\beta-1} \ ,\qquad 1\leq\alpha\leq \rho_\mu,\quad 1\leq\beta\leq\rho_\nu \ ,
\end{equation}
where $X_0$ and $Y_0$  are, by definition,  null vector fields.
Evaluating the Nijenhuis torsion on such eigenvectors, we obtain

\begin{eqnarray}\label{eq:TLautog}
\nn \mathcal{T}_ {\boldsymbol{A}} (X_\alpha, Y_\beta)&= &\Big(\boldsymbol{A}-\mu\mathbf{I}\Big) \Big(\boldsymbol{A}-\nu \mathbf{I}\Big) [X_{\alpha},Y_\beta]+
(\mu-\nu) \Big(X_\alpha(\nu) Y_\beta+Y_\beta(\mu)X_\alpha\Big)\\
 &-& \Big(\boldsymbol{A}-\mu\mathbf{I}\Big)[X_\alpha,Y_{\beta-1}]-\Big(\boldsymbol{A}-\nu\mathbf{I}\Big)[X_{\alpha-1},Y_\beta]+[X_{\alpha-1}, Y_{\beta -1}]\\
\nonumber&-&
\Big(X_\alpha(\nu) Y_{\beta-1}+Y_{\beta-1}(\mu)X_\alpha\Big)+
\Big(X_{\alpha-1}(\nu) Y_{\beta}+Y_{\beta}(\mu)X_{\alpha -1}\Big) .
\end{eqnarray}

\subsection{Haantjes brackets evaluated over common eigenvectors} Let $\bs{A}$ and $\bs{B}$ be two arbitrary (not necessarily Haantjes) operators, and let $X_{\mu }$ and $Y_{\nu }$ be two common eigenvectors. Precisely, let us consider  
\begin{equation}\label{eq:ABauto}
X_{\mu }\in \ker (\bs{A}-\mu_1\bs{I}) \cap \ker (\bs{B}-\mu_2\bs{I})\ , \quad Y_\nu \in \ker (\bs{A}-\nu_1 \bs{I}) \cap \ker (\bs{B}-\nu_2\bs{I})
\ .
\end{equation}
The Frolicher-Nijenhuis bracket satisfies the identity
\begin{equation}
\begin{split}
&\llbracket\boldsymbol{A,B}\rrbracket(X_{\mu},Y_{\nu}) =\bigg((\bs{A}-\mu_1\bs{I}) (\bs{B}-\nu_2\bs{I})+  (\bs{B}-\mu_2\bs{I})(\bs{A}-\nu_1\bs{I})\bigg) [X_{\mu} ,Y_{\nu}] \\
&+
\bigg( (\mu_1-\nu_1\bs) Y(\mu_2)   +  (\mu_2-\nu_2 )Y(\mu_1)\bigg)X_\mu +
\bigg( ( \mu_1-\nu_1) X_\mu(\nu_2)+(\mu_2-\nu_2)  X_{\nu}(\mu_1)\bigg) Y_\nu \ .
\end{split}
\end{equation}
Thus, we get
\begin{equation}
\mathcal{H}_{\boldsymbol{A,B}}(X_{\mu},Y)=\Big( (\bs{A}-\mu_1\bs{I}) (\bs{B}-\nu_2\bs{I})+ (\bs{B}-\mu_2\bs{I})  (\bs{A}-\nu_1\bs{I}) \Big)^2[X_{\mu} ,Y_{\nu}] 
\end{equation}
\begin{equation} \label{eq:H12XY}
\begin{split}
&\mathcal{H}_1(\boldsymbol{A,B})(X_{\mu},Y_{\nu})=(\bs{B}-\mu_2\bs{I})(\bs{B}-\nu_2\bs{I}) (\bs{A}-\mu_1\bs{I})(\bs{A}-\nu_1\bs{I}) [X_{\mu} ,Y_{\nu}] \\
&\qquad\qquad\qquad\qquad+ (\bs{A}-\mu_1\bs{I})(\bs{A}-\nu_1\bs{I}) (\bs{B}-\mu_2\bs{I})(\bs{B}-\nu_2\bs{I}) [X_{\mu} ,Y_{\nu}] \\
&\mathcal{H}_2(\boldsymbol{A,B})(X_{\mu},Y_{\nu})=\\
& \Big( (\bs{A}-\mu_1\bs{I}) (\bs{B}-\nu_2\bs{I})+ (\bs{B}-\mu_2\bs{I}) (\bs{A}-\nu_1\bs{I})\Big) 
(\bs{A}-\mu_1\bs{I})(\bs{A}-\nu_1\bs{I}) [X_{\mu} ,Y_{\nu}] \\
&+(\bs{A}-\mu_1\bs{I})(\bs{A}-\nu_1\bs{I})\Big( (\bs{A}-\mu_1\bs{I}) (\bs{B}-\nu_2\bs{I})+ (\bs{B}-\mu_2\bs{I}) (\bs{A}-\nu_1\bs{I})\Big) [X_{\mu} ,Y_{\nu}]\\
\end{split}
\end{equation}
\end{section}

\section*{Acknowledgement}

The authors gratefully thank  Prof. Y. Kosmann-Schwarzbach for useful discussions. Also, P. T. wishes to thank heartily Prof. N. Kamran for a careful reading of the manuscript, discussions and encouragement. We also wish to thank the Referees for many helpful suggestions.

This work has been partly supported by the research project PGC2018-094898-B-I00, MINECO, Spain, and by the ICMAT Severo Ochoa project SEV-2015-0554 (MINECO).
P. T. is member of the Gruppo Nazionale di Fisica Matematica (GNFM).

\end{document}